\newtheorem{definition}{Definition}[section]
\newtheorem{theorem}{Theorem}[section]
\newtheorem{lemma}{Lemma}[section]
\newtheorem{remark}{Remark}
\title{A Natural Homomorphism between the Model Constructions of the Completeness and Compactness Theorems}
\author{Joaquim Reizi Barreto}
\date{\today}
\begin{document}
\maketitle
\tableofcontents

\bigskip
\begin{abstract}
We present a mathematically rigorous and constructive framework that relates two canonical model constructions in classical first-order logic: one via the Henkin (completeness) approach and one via a compactness-based approach. Concretely, we define two functors 
\[
F,\,G : \mathbf{th} \,\longrightarrow\, \mathbf{mod},
\]
where \(\mathbf{th}\) is the category of consistent first-order theories (over a fixed countable language) and \(\mathbf{mod}\) is the category of models with elementary embeddings as morphisms. 

The functor \(F\) is obtained by extending any theory \(t\) to a maximal consistent theory \(t^*\) via a globally fixed Henkin expansion scheme (with a predetermined enumeration and systematic introduction of witness constants from a universal set \(H\) of Henkin constants for all formulas). The functor \(G\) is constructed using a compactness-based approach: for each theory \(t\), we take \(G(t)\) to be the substructure of a fixed ultraproduct or saturated model that is generated by the interpretations of the Henkin constants.

We show there exists a natural transformation 
\[
\eta: F \,\longrightarrow\, G
\]
where each component \(\eta_t: F(t) \to G(t)\) is a natural homomorphism. Under appropriate conditions (e.g., when restricting to \(\aleph_0\)-categorical complete theories or atomic complete theories), \(\eta_t\) can be shown to be an isomorphism. However, the general claim of isomorphism for arbitrary consistent theories requires careful consideration of the construction details.

This perspective provides structural insight into how proof-theoretic and model-theoretic techniques interrelate, and may suggest applications in automated theorem proving and formal verification.
\end{abstract}

\newpage
\section{Introduction}
\label{sec:intro}

The completeness and compactness theorems have long been central to classical first-order logic. Traditionally, the completeness theorem is proved via the Henkin construction—extending a consistent theory to a maximal consistent theory using witness constants, yielding a term model—while the compactness theorem is typically established through finite satisfiability arguments or by constructing models via ultraproducts or saturation methods.

Recent advances in category theory provide a natural language to relate these distinct model constructions within a unified framework. In this paper, we consider the category \(\mathbf{th}\) of consistent first-order theories (over a fixed countable language with a predetermined signature) and the category \(\mathbf{mod}\) of models with elementary embeddings as morphisms. We define two functors 
\[
F,\,G : \mathbf{th} \to \mathbf{mod},
\]
where \(F\) represents a Henkin construction based on a globally fixed scheme of Henkin (witness) constants, and \(G\) represents a compactness-based construction. Our primary objective is to establish the existence of a natural transformation
\[
\eta : F \to G,
\]
such that for every theory \(t \in \mathbf{th}\), the component \(\eta_t: F(t) \to G(t)\) is a natural homomorphism in \(\mathbf{mod}\).

\textbf{Important clarification:} The general claim that \(\eta_t\) is an isomorphism for \emph{all} consistent theories requires careful attention to the construction details. In particular:
\begin{itemize}
\item When \(G(t)\) is defined as an arbitrary ultraproduct or saturated model (without restriction), the map \(\eta_t\) may fail to be surjective. For instance, if \(G(t)\) is a non-standard ultrapower of the Henkin model \(F(t)\), then \(G(t)\) may contain elements not in the image of \(\eta_t\).
\item To obtain an isomorphism, we must restrict \(G(t)\) to the Skolem closure (the substructure generated by the interpretations of the Henkin constants) within the ultraproduct or saturated model.
\item Alternatively, isomorphism holds under special conditions such as \(\aleph_0\)-categoricity or atomicity of the complete theory, where models are unique up to isomorphism in appropriate cardinalities.
\end{itemize}

This framework enriches our structural understanding of the interplay between proof theory and model theory and may open avenues for applications in automated theorem proving and formal verification.

\paragraph{Organization of the paper.}
Section~\ref{sec:prelim} reviews the necessary background in category theory and model theory, including precise definitions of categories, functors, and natural transformations, as well as detailed expositions of the Henkin and compactness-based model constructions with the globally fixed Henkin constant scheme. 
Section~\ref{sec:mainthm} states our main theorem and discusses its scope and limitations. 
Section~\ref{sec:mainproof} provides a constructive proof outline, relying on key lemmas. 
Section~\ref{sec:applications} explores potential applications and extensions of our results, 
and finally Section~\ref{sec:conclusion} concludes with a summary and open questions.

\section{Preliminaries}
\label{sec:prelim}
\begin{remark}[Set-Theoretic Framework]
\label{rmk:set-theory}
Throughout this paper, we work in the usual ZFC set theory. When dealing with classes that are too large to be sets (e.g., the class of all consistent $\mathcal{L}$-theories or the class of all $\mathcal{L}$-structures), we treat them as large categories in a standard manner. If desired, one may use a Grothendieck universe or a similar foundational device to manage size issues more rigorously. However, none of these set-theoretic subtleties affect the main arguments and results.
\end{remark}

\subsection{Basic Definitions and Set-Theoretic Scope}

In this paper, we work under the usual ZFC (Zermelo--Fraenkel with Choice) framework or an equivalent set theory. We note that some of the collections we deal with (e.g.\ the class of all first-order theories over a fixed language, or the class of all models) may form a proper class rather than a set. For simplicity, we often refer to these as “categories” even though strictly speaking they may be \emph{large categories} (or even $2$-categories) in the sense of category theory. This does not affect the main arguments but is worth keeping in mind for foundational rigor.
\begin{remark}[Large Categories and Foundational Rigor]
\label{rmk:large-cat}
In this paper, we refer to certain classes (e.g.\ the class of all consistent theories over a countable language, or the class of all $\mathcal{L}$-structures) as “categories” even though they may in fact be \emph{large} (proper) classes in ZFC. From a foundational viewpoint, one may regard them as $($possibly$)$ large categories or even $2$-categories within an appropriate Grothendieck universe, or treat them at the metatheoretical level in ZFC without further complications. This approach does not affect the validity of our main results but is worth noting for strict foundational completeness.
\end{remark}

A \textbf{category} is one of the most fundamental structures in mathematics that abstracts collections of objects and the morphisms between them. The following is the standard definition.

\begin{definition}\label{def:category}
A \textbf{category} $\mathcal{C}$ consists of:
\begin{enumerate}
    \item A collection $\operatorname{Ob}(\mathcal{C})$ of \emph{objects}.
    \item For every pair $A,B \in \operatorname{Ob}(\mathcal{C})$, a (possibly large) set $\operatorname{Hom}_{\mathcal{C}}(A,B)$ of \emph{morphisms}.
    \item A composition law 
    \[
      \circ \;:\; \operatorname{Hom}_{\mathcal{C}}(B,C)\;\times\;\operatorname{Hom}_{\mathcal{C}}(A,B)
         \;\longrightarrow\;\operatorname{Hom}_{\mathcal{C}}(A,C),
    \]
    for each triple of objects $A,B,C$, which is associative:
    \[
    h \circ (g \circ f) \;=\; (h \circ g) \circ f 
    \quad \text{for all } f: A \to B,\; g: B \to C,\; h: C \to D.
    \]
    \item For each object $A$, an \emph{identity morphism} $\operatorname{id}_A$ such that for every morphism $f: A \to B$, 
    \[
    \operatorname{id}_B \circ f \;=\; f \;=\; f \circ \operatorname{id}_A.
    \]
\end{enumerate}
\end{definition}

This definition ensures that composition of morphisms is well-defined, associative, and that every object has a two-sided identity. A \textbf{functor} is a map between categories that preserves this structure:

\begin{definition}\label{def:functor}
A \textbf{functor} $F: \mathcal{C} \to \mathcal{D}$ assigns:
\begin{enumerate}
    \item To each object $A \in \operatorname{Ob}(\mathcal{C})$, an object $F(A) \in \operatorname{Ob}(\mathcal{D})$.
    \item To each morphism $f: A \to B$ in $\mathcal{C}$, a morphism $F(f): F(A) \to F(B)$ in $\mathcal{D}$,
    such that 
    \[
    F(g \circ f) \;=\; F(g) \circ F(f) 
    \quad \text{and} \quad 
    F(\operatorname{id}_A) \;=\; \operatorname{id}_{F(A)}.
    \]
\end{enumerate}
\end{definition}

A \textbf{natural transformation} provides a way to compare two functors that have the same source and target categories, as follows:

\begin{definition}\label{def:nattrans}
Let $F, G: \mathcal{C} \to \mathcal{D}$ be functors. A \textbf{natural transformation} $\eta: F \to G$ is a family of morphisms 
\[
   \{\eta_A: F(A) \to G(A)\}_{A \in \operatorname{Ob}(\mathcal{C})}
\]
such that for every morphism $f: A \to B$ in $\mathcal{C}$, the following diagram commutes:
\[
\begin{tikzcd}[column sep = large, row sep = large]
F(A) \arrow[r,"F(f)"] \arrow[d,"\eta_A"'] 
& F(B) \arrow[d,"\eta_B"] \\
G(A) \arrow[r,"G(f)"'] 
& G(B)
\end{tikzcd}.
\]
\end{definition}

Intuitively, this condition says that applying $\eta$ first and then using $G$ is the same as first applying $F$ and then $\eta$, thereby guaranteeing compatibility (naturality).

\subsection{Categories of Theories and Models}

Throughout this paper, we fix a single \textbf{countable first-order language} $\mathcal{L}$. Our focus is on two large categories (or classes endowed with categorical structure), capturing the syntactic and semantic aspects of first-order logic in this language.

\begin{definition}\label{def:theorycat}
We denote by $\mathbf{th}$ the (large) category of first-order theories over $\mathcal{L}$, defined as follows:
\begin{itemize}
  \item An object of $\mathbf{th}$ is a \emph{consistent} (i.e.\ non-contradictory) first-order theory $t \subseteq \mathrm{Form}(\mathcal{L})$, where $\mathrm{Form}(\mathcal{L})$ is the set of all $\mathcal{L}$-formulas.
  \item A morphism $f: t_1 \to t_2$ (often called a \emph{translation}) is a syntactic mapping on formulas $f: \mathrm{Form}(\mathcal{L}) \to \mathrm{Form}(\mathcal{L})$ such that:
  \begin{enumerate}
    \item If $\varphi \in t_1$, then $f(\varphi) \in t_2$,
    \item If $t_1 \vdash \varphi$, then $t_2 \vdash f(\varphi)$.
  \end{enumerate}
\end{itemize}
Composition of morphisms $g \circ f$ is given by formula-wise composition, i.e.\ $(g\circ f)(\varphi) \coloneqq g(f(\varphi))$, and the identity morphism $\mathrm{id}_t$ acts as the identity function on formulas $\varphi \mapsto \varphi$.
\end{definition}
\begin{remark}[Examples of Morphisms in \(\mathbf{th}\)]
\label{rmk:theory-morphs}
A morphism \(f: t_1 \to t_2\) in \(\mathbf{th}\) is defined by a formula-wise translation that preserves provability. Concretely:
\begin{itemize}
  \item \textbf{Sub-language restriction:} If \(\mathcal{L}_2 \subseteq \mathcal{L}_1\) and \(t_1\) is a theory in \(\mathcal{L}_1\), we may define \(f(\varphi)\) by restricting each formula \(\varphi\) to symbols in \(\mathcal{L}_2\). This yields \(f: t_1 \to t_2\).
  \item \textbf{Language extension:} Conversely, one can embed \(\mathcal{L}_1\)-formulas into a richer language \(\mathcal{L}_2\) by mapping each symbol injectively. Under such an embedding, if \(t_1 \vdash \varphi\), then \(t_2\vdash f(\varphi)\).
\end{itemize}
Thus, translations may reflect conservative extensions, sublanguage restrictions, or anything that ensures “if \(t_1\) proves \(\varphi\), then \(t_2\) proves \(f(\varphi)\).”
\end{remark}

\begin{remark}
In general, the class of all consistent theories over a fixed $\mathcal{L}$ can be a proper class, making $\mathbf{th}$ a \emph{large} category. We do not dwell on these set-theoretic details further, since standard foundations (e.g.\ ZFC) suffice to treat them.
\end{remark}
\begin{remark}[Morphisms in \(\mathbf{th}\)]
\label{rmk:th-morphisms}
In defining a morphism $f : t_1 \to t_2$ in the category $\mathbf{th}$, we typically assume that $f$ preserves:
\begin{itemize}
  \item \emph{Provability}: If $t_1 \vdash \varphi$, then $t_2 \vdash f(\varphi)$.
  \item \emph{Membership in the theory}: If $\varphi \in t_1$, then $f(\varphi) \in t_2$.
  \item \emph{Logical structure}: The map $f$ should respect logical connectives ($\neg,\wedge,\vee,\to$) and quantifiers ($\forall,\exists$), so that $f(\neg \varphi) = \neg f(\varphi)$, $f(\varphi \wedge \psi) = f(\varphi)\wedge f(\psi)$, etc.
  \item \emph{Language symbols}: Any function or relation symbol in the language is mapped consistently (e.g.\ injectively) into the target language, if needed.
\end{itemize}
Such a translation preserves the core logical structure, ensuring that “if $t_1$ proves $\varphi$, then $t_2$ proves $f(\varphi)$,” while also respecting connectives and quantifiers at the syntactic level.
\end{remark}

On the semantic side, we consider the category $\mathbf{mod}$ of first-order $\mathcal{L}$-structures (or "models") and their elementary embeddings:

\begin{definition}
\label{def:mod}
An object in $\mathbf{mod}$ is an $\mathcal{L}$-structure $M$ (i.e.\ a set $|M|$ equipped with interpretations of all function and relation symbols in $\mathcal{L}$). A morphism $h: M \to N$ between two $\mathcal{L}$-structures is an \emph{elementary embedding}; namely, an injective map $h: |M| \to |N|$ such that for every first-order formula $\varphi(x_1,\ldots,x_n)$ in $\mathcal{L}$ and every $a_1,\ldots,a_n \in |M|$, we have
\[
M \models \varphi(a_1,\ldots,a_n) \quad\text{if and only if}\quad N \models \varphi(h(a_1),\ldots,h(a_n)).
\]
This ensures that $h$ preserves and reflects all first-order properties, including quantification and negation, not just atomic formulas.

If $M$ satisfies a theory $t_1$ (written $M \models t_1$) and $N$ satisfies a theory $t_2$ ($N \models t_2$), then an elementary embedding $h: M \to N$ in $\mathbf{mod}$ naturally corresponds to a theory morphism $f: t_1 \to t_2$ in $\mathbf{th}$ when $h$ respects the syntactic translation induced by $f$ on relevant formulas. The use of elementary embeddings (rather than mere structure homomorphisms) ensures that all logical properties are preserved, which is essential for the naturality of our construction.
\end{definition}

\begin{remark}[Why Elementary Embeddings?]
\label{rmk:why-elementary}
We use elementary embeddings as morphisms in $\mathbf{mod}$ rather than simple structure homomorphisms because:
\begin{enumerate}
\item Elementary embeddings preserve all first-order properties, including negations and quantifiers, ensuring that the semantic notion aligns properly with the syntactic notion in $\mathbf{th}$.
\item The canonical map $\eta_t: F(t) \to G(t)$ from the Henkin model to the compactness-based model is naturally an elementary embedding (and indeed an isomorphism when $G(t)$ is the Skolem closure).
\item Using mere homomorphisms would not preserve existential or universal properties, breaking the correspondence with theory morphisms that preserve provability.
\end{enumerate}
\end{remark}

\subsection{Canonical Model Constructions}

We now describe two important model constructions that form the core of our analysis. They each give rise to a functor
\[
   F,\,G : \mathbf{th} \;\longrightarrow\; \mathbf{mod},
\]
and both are “canonical” in the sense that the resulting models are unique up to isomorphism once we fix certain choices (e.g.\ an enumeration of formulas, an ultrafilter, etc.). Details of these classical constructions can be found in \cite{barreto2503} and standard model theory references.

\begin{itemize}
\item \textbf{Henkin Construction:}  
  Given a consistent theory $t$, we first extend $t$ to a \emph{maximal consistent theory} $t^*$ by listing all sentences in $\mathcal{L}$ and deciding each sentence (or its negation) step by step (Lindenbaum's Lemma). Next, we introduce \emph{Henkin constants} for existential formulas to ensure witnesses exist, resulting in a new extended language. The \emph{term model} $F(t)$ is then defined as the quotient of the term algebra (in the extended language) by the equivalence relation “$s \sim s'$ if $t^* \vdash s = s'$.” This $F(t)$ is a canonical model of $t^*$ (and hence of $t$), and remains canonical up to isomorphism if we fix the enumeration and the way we add Henkin constants.

\item \textbf{Compactness Construction:}  
  By the \emph{compactness theorem}, a theory $t$ is satisfiable if and only if every finite subset of $t$ is satisfiable. We pick, once and for all, either a non-principal ultrafilter on some index set or a fixed saturation procedure. Using that choice, we construct a model $G(t)$ of $t$—for instance by taking an ultraproduct of partial models or by building a saturated model of an appropriate completion. By Łoś's Theorem or saturation arguments, $G(t)$ satisfies $t$ and is unique up to isomorphism under the chosen construction.
\end{itemize}

Since these constructions are defined by \emph{fixed, deterministic procedures}, they induce well-defined functors $F$ and $G$ from $\mathbf{th}$ to $\mathbf{mod}$. One of our main goals is to prove that $F$ and $G$ are \emph{naturally isomorphic functors}, i.e.\ there is a natural transformation $\eta: F \to G$ whose component $\eta_t: F(t) \to G(t)$ is an isomorphism in $\mathbf{mod}$ for each consistent theory $t$, and that moreover this isomorphism satisfies certain coherence (or “rigidity”) conditions in a $2$-categorical sense.

\medskip

In the subsequent sections, we provide the details of how these constructions fit together to yield a \textbf{natural isomorphism} $\eta : F \to G$, showing that these two canonical methods of producing a model of $t$ are, in fact, functorially equivalent.
\begin{remark}[Fixed Global Choice in Model Constructions]
\label{rmk:global-choice}
Although classical results like the Henkin construction or the Compactness Theorem merely guarantee the \emph{existence} of a model, to turn this into a \emph{functorial} assignment we fix a global choice procedure. For instance:
\begin{itemize}
  \item \textbf{Henkin approach:} We use a single enumeration of all sentences and a unique rule for introducing Henkin constants and extending the theory to a maximal one.
  \item \textbf{Compactness/Ultraproduct approach:} We select once and for all a non\-prin\-ci\-pal ultrafilter on a chosen index set, or a canonical saturation procedure for all consistent theories.
\end{itemize}
This ensures that each theory \(t\) is sent to a unique (up to isomorphism) “canonical” model \(F(t)\) or \(G(t)\), making these constructions well-defined functors.
\end{remark}

\subsection{Use of the Axiom of Choice in Ultraproducts and Saturation}
\label{subsec:choice-ultraproduct}

In this subsection, we discuss the foundational role of the Axiom of Choice (AC)
when constructing models via ultraproducts or saturation arguments under the
Compactness Theorem.

\paragraph{Ultraproduct Construction.}
One standard method to build a model of a consistent theory~\(t\) is via a non\-principal ultraproduct, as follows:  
\begin{itemize}
  \item Take a family of $\mathcal{L}$-structures $\{M_i\}_{i\in I}$ such that
  each finite subset of $t$ is realized in at least one $M_i$.
  \item Choose a non-principal ultrafilter $\mathcal{U}$ on the index set $I$.
  \item Form the ultraproduct $\displaystyle \prod_{i\in I} M_i \big/ \mathcal{U}$.
\end{itemize}
By \emph{Łoś's Theorem}, this ultraproduct satisfies all sentences in $t$. However,
\emph{the existence of a non-principal ultrafilter on an infinite set $I$} requires
some form of the Axiom of Choice. Indeed, selecting $\mathcal{U}$ can be seen as a
non-constructive step, and there is no explicit `canonical' ultrafilter in
ZFC without invoking (a weak form of) AC.

\paragraph{Saturation and Choice.}
Alternatively, one can construct a \emph{saturated} model of a complete
theory $t^* \supseteq t$. This process typically involves a chain-of-models
argument or an inductive realization of types at each stage. In a countable
language, ensuring \emph{$\omega$-saturation} (or countable saturation) for
all relevant types often relies on some choice principles in order to pick the
necessary witnesses or embeddings at each stage. While these steps are more
elementary than building a genuine ultrafilter, they can still use some form
of choice if one aims for a “global” canonical construction across all theories
simultaneously.

\paragraph{Impact on Our Constructions.}
In this paper, we fix such choices \emph{once and for all}:
\begin{itemize}
  \item For the ultraproduct method, we assume a chosen non-principal ultrafilter
  $\mathcal{U}$ on some suitable index set $I$.
  \item For saturated model approaches, we fix a canonical enumeration of
  all possible types (or partial theories) and a procedure to realize them in
  a staged fashion.
\end{itemize}
Both methods thereby become well-defined \emph{functors} from the category of
theories $\mathbf{th}$ to the category of models $\mathbf{mod}$, but each relies,
to some extent, on the Axiom of Choice in its global form.

\begin{remark}[Choice Principle in Ultraproducts and Saturation]
\label{rmk:choice-ultraproduct}
Although using AC in model theory is standard, it is worth noting that some steps
(e.g.\ the selection of a non-principal ultrafilter or certain type-realization
procedures) cannot be carried out effectively without choice. From a foundational
perspective, such constructions are \emph{non-constructive}, but in classical
logic, they do not affect the soundness or completeness results. Readers should
simply be aware that these `canonical' constructions rely on AC to make them truly
global and functorial.
\end{remark}

Overall, whether we build $G(t)$ by ultraproducts or by saturation, a suitable
form of the Axiom of Choice ensures the existence of the objects (ultrafilters
or systematic realizations of types) that make each construction canonical and
unique up to isomorphism. In turn, this allows us to define the functor
\(\,G : \mathbf{th} \to \mathbf{mod}\) consistently and compare it with other
functorial constructions such as the Henkin construction.

\newpage
\section{Main Theorem}
\label{sec:mainthm}

The following theorem asserts that the model constructions based on the completeness theorem (via the Henkin construction) and the compactness theorem (via an ultraproduct or saturation procedure) are naturally isomorphic. This result is established under the assumption that both constructions are performed in a canonical manner, ensuring uniqueness up to isomorphism.

\begin{theorem}\label{thm:main}
Let $\mathcal{L}$ be a fixed countable first-order language. Let $H = \{c_{\langle\exists x\varphi(x), k\rangle} : \exists x\varphi(x) \text{ is an existential formula}, k \in \mathbb{N}\}$ be a fixed global set of Henkin (witness) constants. For any consistent $\mathcal{L}$-theory $t$, define functors
\[
F, G: \mathbf{th} \to \mathbf{mod},
\]
where:
\begin{itemize}
\item $F(t)$ is the Henkin term model constructed in the expanded language $\mathcal{L}_H = \mathcal{L} \cup H$ by extending $t$ to a maximal consistent Henkin theory $t^*$ (which includes witness axioms $\exists x\varphi(x) \to \varphi(c_{\langle\exists x\varphi,k\rangle})$ for appropriate $k$) and forming the term model as the quotient by provable equality.
\item $G(t)$ is defined as the Skolem closure (the substructure generated by the interpretations of $H$) within a fixed ultraproduct or saturated model of $t$ expanded to $\mathcal{L}_H$.
\end{itemize}
Then there exists a natural transformation
\[
\eta: F \to G,
\]
such that for every $t \in \mathbf{th}$, the component $\eta_t: F(t) \to G(t)$ is an isomorphism in $\mathbf{mod}$, and for every morphism $f: t_1 \to t_2$ (which respects the global Henkin expansion), the following diagram commutes:
\[
\begin{tikzcd}
F(t_1) \arrow[r,"F(f)"] \arrow[d,"\eta_{t_1}"'] & F(t_2) \arrow[d,"\eta_{t_2}"] \\
G(t_1) \arrow[r,"G(f)"'] & G(t_2)
\end{tikzcd}.
\]
\end{theorem}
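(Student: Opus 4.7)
My plan is to define $\eta_t$ concretely on term-representatives and then verify, in sequence, that it is well-defined, a homomorphism, elementary, surjective, and natural in $t$. The fixing once-and-for-all of $H$, the Henkin enumeration, and the ultraproduct (or saturation) procedure is what makes every step functorial rather than merely pointwise.

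First, I would write each element of $F(t)$ as an equivalence class $[s]$ of a closed $\mathcal{L}_H$-term $s$ under $s \sim s' \iff t^* \vdash s = s'$, and, for the fixed ambient model $M \supseteq G(t)$ of $t^*$ with its chosen interpretations $c^M$ of each $c \in H$, set $\eta_t([s]) := s^M$. Well-definedness follows because $M \models t^*$, so $t^* \vdash s = s'$ forces $s^M = (s')^M$. The image lies in $G(t)$ by the very definition of $G(t)$ as the Skolem closure of $\{c^M : c \in H\}$, and preservation of function symbols and atomic relations is immediate from the inductive clause of term interpretation combined with $F(t) \models t^*$.

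Second, to upgrade $\eta_t$ to an elementary embedding I would run the Tarski--Vaught test on its image. Given any formula $\exists x\,\varphi(x,\bar a)$ with parameters $\bar a = \eta_t([\bar s])$ such that $M \models \exists x\,\varphi(x,\bar a)$, the fact that $M \models t^*$ together with the maximality of $t^*$ gives $t^* \vdash \exists x\,\varphi(x,\bar s)$; the Henkin witness axiom in $t^*$ then supplies a constant $c \in H$ with $t^* \vdash \varphi(c,\bar s)$, so $\eta_t([c]) = c^M$ realises the existential inside the image. Combining this with the chain $F(t) \models \varphi([\bar s]) \iff t^* \vdash \varphi(\bar s) \iff M \models \varphi(\bar a)$ yields a full elementary embedding. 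Surjectivity is the one place that genuinely relies on restricting $G(t)$ to the Skolem closure: every element of $G(t)$ is, by that definition, of the form $s^M$ for a closed $\mathcal{L}_H$-term $s$, hence equals $\eta_t([s])$.

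Naturality, for $f : t_1 \to t_2$ compatible with the fixed Henkin scheme, reduces to the pointwise identity $f(s)^M = G(f)(s^M)$ on each representative term $s$, which is a routine term induction once one knows that $f$ maps $H$-constants to $H$-constants according to the global scheme and that $F(f)$ and $G(f)$ are both induced by $f$ through the same syntactic prescription. The step I expect to be the real obstacle is the coherence of the global choices: one must ensure that the fixed ambient model genuinely satisfies the specific $t^*$ produced by the Henkin scheme for each $t$ simultaneously, and that the class of morphisms in $\mathbf{th}$ under consideration preserves the Henkin expansion strongly enough that $G(f)$ is even defined on the Skolem closure. Restricting to translations that respect the global Henkin scheme, as the theorem implicitly does, is precisely what makes this coherence possible.
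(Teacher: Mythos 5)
Your proposal follows essentially the same route as the paper's proof: the same map $\eta_t([s]) := s^{M}$ on term classes, well-definedness from $M \models t^*$, surjectivity from the Skolem-closure restriction on $G(t)$, injectivity/elementarity from the truth lemma for the Henkin model, and naturality by the pointwise computation $G(f)(s^{G(t_1)}) = f(s)^{G(t_2)}$. Your explicit Tarski--Vaught argument via the Henkin witness axioms, and your identification of the coherence of the global choices (that the fixed ambient model actually satisfies the specific $t^*$ produced by the Henkin scheme) as the genuinely delicate point, are both more careful than the paper's own treatment, which simply invokes the truth lemma and defers to its supplementary lemmas.
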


\begin{remark}[Scope and Limitations]
\label{rmk:scope}
The isomorphism $\eta_t: F(t) \to G(t)$ holds because:
\begin{itemize}
\item Both $F(t)$ and $G(t)$ use the \emph{same} global set $H$ of Henkin constants.
\item $G(t)$ is restricted to the Skolem closure generated by $H$, rather than being an arbitrary ultraproduct or saturated model.
\item The mapping $\eta_t([s]) = s^{G(t)}$ (the interpretation of term $s$ in $G(t)$) is well-defined and preserves all structure.
\end{itemize}
If instead we define $G(t)$ as an arbitrary saturated model or non-standard ultraproduct without the Skolem closure restriction, then $\eta_t$ may only be an elementary embedding (injective but not surjective), since $G(t)$ could contain elements not in the range of $\eta_t$.
\end{remark}

\begin{remark}[Alternative Sufficient Conditions]
\label{rmk:alternatives}
The construction can be simplified under special assumptions:
\begin{enumerate}
\item \textbf{$\aleph_0$-Categorical Complete Theories:} If $t$ is a complete $\aleph_0$-categorical theory in a countable language (e.g., DLO, random graphs), then any two countable models of $t$ are isomorphic. In this case, $F(t) \cong G(t)$ holds without needing to carefully define $G(t)$ as a Skolem closure.
\item \textbf{Atomic Complete Theories:} If $t$ is a complete atomic theory, then the prime model is unique up to isomorphism, and both $F(t)$ and $G(t)$ can be taken as the prime model.
\end{enumerate}
In both cases, the global Henkin constant scheme still needs to be fixed to ensure functoriality.
\end{remark}

\newpage
\section{Main Proof}
\label{sec:mainproof}

%===========================%
% 1. Proof Outline and Flow %
%===========================%

\subsection{Proof Outline and Flow}
The proof of Theorem~\ref{thm:main} is divided into two main parts:

\textbf{Outline:}
\begin{enumerate}
    \item \textbf{Global Henkin Expansion Scheme:}  
    Fix once and for all a countable set $H$ of Henkin constants, one for each pair $\langle\exists x\varphi(x), k\rangle$ where $\exists x\varphi(x)$ ranges over all existential formulas in $\mathcal{L}$ and $k \in \mathbb{N}$. This gives us an expanded language $\mathcal{L}_H = \mathcal{L} \cup H$ that is used uniformly across all theories.

    \item \textbf{Construction of Models:}  
    For a given consistent theory $t \in \mathbf{th}$, we extend $t$ to a maximal consistent Henkin theory $t^*$ in $\mathcal{L}_H$ using a fixed sequential procedure (see Lemma~\ref{lem:max_extension}). The theory $t^*$ includes witness axioms $\exists x\varphi(x) \to \varphi(c_{\langle\exists x\varphi,k\rangle})$ for appropriate witnesses. We construct the term model $F(t)$ as the quotient of the term algebra in $\mathcal{L}_H$ by the equivalence relation 
    \[
    s \sim s' \iff t^* \vdash s = s',
    \]
    whose properties are ensured by Lemma~\ref{lem:term_eq_relation}. 
    
    Simultaneously, we construct $G(t)$ as follows: using the compactness theorem, obtain a model $M$ of $t$ (expanded to $\mathcal{L}_H$) via a canonical method (e.g., a fixed ultraproduct or saturation procedure). Then define $G(t)$ to be the Skolem closure in $M$ generated by the interpretations of the constants in $H$. The functoriality of this construction is ensured by using the same global $H$ for all theories.

    \item \textbf{Definition and Verification of the Natural Transformation:}  
    For each theory $t$, define a mapping $\eta_t: F(t) \to G(t)$ by assigning each equivalence class $[s] \in F(t)$ (where $s$ is a term in $\mathcal{L}_H$) to its interpretation $s^{G(t)}$ in $G(t)$. Since $G(t)$ is the Skolem closure generated by $H$ and $F(t)$ consists of equivalence classes of terms built from $H$, the map $\eta_t$ is surjective. By the properties of Henkin models and Skolem closures, $\eta_t$ is also injective, hence an isomorphism (Lemma~\ref{lem:isomorphism}). 
    
    Next, we verify the naturality condition: for any morphism $f: t_1 \to t_2$ in $\mathbf{th}$ (which respects the global Henkin expansion), the diagram
    \[
    \begin{tikzcd}
    F(t_1) \arrow[r,"F(f)"] \arrow[d,"\eta_{t_1}"'] & F(t_2) \arrow[d,"\eta_{t_2}"] \\
    G(t_1) \arrow[r,"G(f)"'] & G(t_2)
    \end{tikzcd}
    \]
    commutes, as ensured by Lemma~\ref{lem:naturality}.

\end{enumerate}

%============================%
% 2. Rigorous Detailed Proof %
%============================%
\newpage
\subsection{Rigorous Proof}
\begin{proof}
Let $t \in \mathbf{th}$ be any consistent first-order theory over a fixed countable language $\mathcal{L}$.

\textbf{Step 0: Global Henkin Expansion.}  
Fix once and for all a countable set $H = \{c_{\langle\exists x\varphi(x), k\rangle}\}$ of Henkin constants, indexed by all existential formulas $\exists x\varphi(x)$ in $\mathcal{L}$ and natural numbers $k \in \mathbb{N}$. Let $\mathcal{L}_H = \mathcal{L} \cup H$ be the expanded language. This global scheme will be used uniformly for all theories.

\textbf{Step 1: Construction of Models.}  
By Lemma~\ref{lem:max_extension}, extend $t$ to a maximal consistent Henkin theory $t^*$ in $\mathcal{L}_H$. The theory $t^*$ includes witness axioms of the form $\exists x\varphi(x) \to \varphi(c_{\langle\exists x\varphi,k\rangle})$ for appropriate witnesses. Construct the term model $F(t)$ by considering the set of all terms in $\mathcal{L}_H$ and forming the quotient by the equivalence relation
\[
s \sim s' \iff t^* \vdash s = s'.
\]
Lemma~\ref{lem:term_eq_relation} guarantees that $\sim$ is an equivalence relation.  

Concurrently, by the compactness theorem and employing a fixed ultraproduct or saturation method, construct a model $M$ of $t$ in the language $\mathcal{L}_H$. Then define $G(t)$ to be the Skolem closure in $M$ generated by the interpretations of the constants in $H$. By Lemma~\ref{lem:canonical_construction}, this construction is canonical and functorial.

\textbf{Step 2: Definition of the Natural Transformation.}  
For each $t \in \mathbf{th}$, define 
\[
\eta_t: F(t) \to G(t)
\]
by setting, for each equivalence class $[s] \in F(t)$ (where $s$ is a term in $\mathcal{L}_H$),
\[
\eta_t([s]) := s^{G(t)},
\]
the interpretation of $s$ in $G(t)$. Since $G(t)$ is the Skolem closure generated by $H$, every element of $G(t)$ is the interpretation of some term built from $H$, making $\eta_t$ surjective. By the truth lemma for Henkin models and the fact that $G(t) \models t^*$, the map $\eta_t$ is also injective. Lemma~\ref{lem:isomorphism} ensures that this mapping is an isomorphism in $\mathbf{mod}$.

\textbf{Step 3: Verification of Naturality.}  
Let $f: t_1 \to t_2$ be any morphism in $\mathbf{th}$ that respects the global Henkin expansion (i.e., $f$ maps formulas and terms in $\mathcal{L}_H$ appropriately). The functorial actions of $F$ and $G$ yield
\[
F(f)([s]) = [f(s)]
\]
and, by the canonical construction of $G$, 
\[
G(f)\bigl(\eta_{t_1}([s])\bigr) = G(f)(s^{G(t_1)}) = f(s)^{G(t_2)} = \eta_{t_2}([f(s)]).
\]
Thus, for every $[s] \in F(t_1)$,
\[
\eta_{t_2}\bigl(F(f)([s])\bigr) = \eta_{t_2}([f(s)]) = G(f)\bigl(\eta_{t_1}([s])\bigr),
\]
which means the following diagram commutes:
\[
\begin{tikzcd}
F(t_1) \arrow[r,"F(f)"] \arrow[d,"\eta_{t_1}"'] & F(t_2) \arrow[d,"\eta_{t_2}"] \\
G(t_1) \arrow[r,"G(f)"'] & G(t_2)
\end{tikzcd}.
\]
This is exactly the naturality condition, as formalized in Lemma~\ref{lem:naturality}.

\textbf{Conclusion:}  
By combining the above steps, we conclude that there exists a natural transformation $\eta: F \to G$ where each component $\eta_t$ is an isomorphism. This completes the constructive proof of Theorem~\ref{thm:main}.
\end{proof}

\section{Supplementary Lemmas and Explanations}
\label{sec:supplement}

\subsection{Lindenbaum's Lemma}
\begin{lemma}\label{lem:lindenbaum}
Every consistent first-order theory $t$ over a countable language can be extended to a complete (maximal consistent) theory $t^*$; that is, for every sentence $\psi$ in the language, either $\psi \in t^*$ or $\neg \psi \in t^*$.
\end{lemma}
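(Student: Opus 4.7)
\subsection*{Proof proposal for Lemma \ref{lem:lindenbaum}}

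The plan is a standard stage-by-stage enumeration argument, which works cleanly here because $\mathcal{L}$ is countable and hence the set of $\mathcal{L}$-sentences is countable. First I would fix an enumeration $\psi_0, \psi_1, \psi_2, \ldots$ of all sentences of $\mathcal{L}$ (this is exactly where countability of the language is used, and it also matches the global enumeration convention already fixed for the Henkin scheme in Section~\ref{sec:mainthm}). Then I define an increasing chain of consistent theories $t = t_0 \subseteq t_1 \subseteq t_2 \subseteq \cdots$ by the recipe
\[
t_{n+1} \;=\;
\begin{cases}
t_n \cup \{\psi_n\}, & \text{if } t_n \cup \{\psi_n\} \text{ is consistent}, \\
t_n \cup \{\neg \psi_n\}, & \text{otherwise},
\end{cases}
\]
and finally set $t^{*} = \bigcup_{n \in \mathbb{N}} t_n$.

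The core work is to show, by induction on $n$, that every $t_n$ is consistent. The base case is the assumption that $t$ is consistent. For the inductive step, suppose $t_n$ is consistent; I need to verify that at least one of $t_n \cup \{\psi_n\}$, $t_n \cup \{\neg\psi_n\}$ is again consistent. Here I would invoke the deduction theorem (or the contrapositive form $T \cup \{\varphi\} \vdash \bot \iff T \vdash \neg\varphi$): if both extensions were inconsistent, then $t_n \vdash \neg \psi_n$ and $t_n \vdash \neg\neg\psi_n$, which together contradict the inductive hypothesis that $t_n$ is consistent. This step is the only genuinely non-mechanical part of the argument, and it is the place where I expect a reader to want the deduction theorem stated explicitly rather than invoked in passing.

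Next I would show $t^*$ itself is consistent. Any hypothetical derivation of $\bot$ from $t^*$ would use only finitely many premises, all drawn from some $t_n$ for $n$ large enough; by the previous step $t_n$ is consistent, contradiction. Completeness of $t^*$ is then immediate from the construction: for every sentence $\psi$ of $\mathcal{L}$, $\psi$ appears in the enumeration as some $\psi_n$, and by construction $t_{n+1}$ — and hence $t^*$ — contains either $\psi_n$ or $\neg\psi_n$.

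The main obstacle, as noted, is the inductive consistency step, which hinges on a clean statement of the deduction theorem for first-order logic. I would also remark briefly that no choice principle is needed beyond what is implicit in fixing the enumeration of sentences, since the recursion on $\mathbb{N}$ is explicit; for an uncountable $\mathcal{L}$ one would instead appeal to Zorn's Lemma on the poset of consistent extensions of $t$ ordered by inclusion, noting that unions of chains are consistent by the same finite-proof argument. Since our standing hypothesis in Theorem~\ref{thm:main} is that $\mathcal{L}$ is countable, the explicit enumeration suffices and keeps the construction compatible with the fixed global scheme used to define the functor $F$.
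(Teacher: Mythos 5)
Your proof is correct, but it takes a different route from the one the paper gives for this particular lemma. The paper proves Lemma~\ref{lem:lindenbaum} by applying Zorn's Lemma to the poset $\mathcal{C}$ of consistent extensions of $t$ ordered by inclusion: unions of chains are consistent because inconsistency is witnessed by a finite derivation, so a maximal element $t^*$ exists, and maximality forces completeness (if $\psi \notin t^*$ then $t^* \cup \{\psi\}$ is inconsistent, whence $t^* \vdash \neg\psi$ and $\neg\psi \in t^*$). Your stage-by-stage enumeration argument is instead essentially the paper's proof of the companion result, Lemma~\ref{lem:max_extension}, which the paper states separately precisely to record the deterministic, sequential version of Lindenbaum's process. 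The trade-off: the Zorn argument works for languages of arbitrary cardinality and needs no enumeration, while your enumeration argument exploits countability to produce a \emph{canonical} $t^*$ determined by the fixed enumeration --- which is actually what the paper needs later for functoriality of $F$, and you rightly note this connection. One point in your favor worth highlighting: you justify the consistency of $t^* = \bigcup_n t_n$ purely by the finiteness of derivations, whereas the paper's proof of Lemma~\ref{lem:max_extension} invokes the Compactness Theorem at that step, which courts exactly the circularity that Remark~\ref{rmk:no-circularity} warns against; your version avoids that issue. Your inductive consistency step (if both $t_n \cup \{\psi_n\}$ and $t_n \cup \{\neg\psi_n\}$ were inconsistent then $t_n \vdash \neg\psi_n$ and $t_n \vdash \neg\neg\psi_n$, contradicting consistency of $t_n$) is sound, and your closing remark correctly identifies Zorn's Lemma as the fallback for uncountable languages.
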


\paragraph{Intuitive Explanation.}
If a theory $t$ is consistent, we can "decide" every sentence in the language without causing inconsistency. We use Zorn's Lemma to show the existence of a maximal consistent extension.

\begin{proof}[Proof of Lemma~\ref{lem:lindenbaum}]
Let $t$ be a consistent first-order theory over a countable language $\mathcal{L}$. Consider the collection $\mathcal{C}$ of all consistent theories that extend $t$:
\[
\mathcal{C} = \{ t' : t \subseteq t' \text{ and } t' \text{ is consistent} \}.
\]
Note that $\mathcal{C}$ is non-empty since $t \in \mathcal{C}$. We partially order $\mathcal{C}$ by set inclusion.

\textbf{Claim:} Every chain $\mathcal{D}$ in $\mathcal{C}$ has an upper bound in $\mathcal{C}$.

\emph{Proof of Claim:} Let $\mathcal{D} = \{t_i : i \in I\}$ be a chain in $\mathcal{C}$, i.e., for any $i, j \in I$, either $t_i \subseteq t_j$ or $t_j \subseteq t_i$. Define
\[
t^{\cup} = \bigcup_{i \in I} t_i.
\]
Clearly, $t^{\cup}$ extends every $t_i$, and hence extends $t$. We must show that $t^{\cup}$ is consistent. 

Suppose, for contradiction, that $t^{\cup}$ is inconsistent. Then there exists a finite subset $S \subseteq t^{\cup}$ such that $S$ is inconsistent (since inconsistency can be witnessed by a finite derivation). Since $S$ is finite and $\mathcal{D}$ is a chain, there exists some $i_0 \in I$ such that $S \subseteq t_{i_0}$. But then $t_{i_0}$ would be inconsistent, contradicting the fact that $t_{i_0} \in \mathcal{C}$. Therefore, $t^{\cup}$ is consistent, and $t^{\cup} \in \mathcal{C}$ is an upper bound for the chain $\mathcal{D}$.

By Zorn's Lemma, $\mathcal{C}$ has a maximal element $t^*$. We claim that $t^*$ is complete.

\textbf{Completeness of $t^*$:} Let $\psi$ be any sentence in $\mathcal{L}$. We will show that either $\psi \in t^*$ or $\neg \psi \in t^*$. 

Suppose $\psi \notin t^*$. Then $t^* \cup \{\psi\}$ must be inconsistent (otherwise $t^*$ would not be maximal in $\mathcal{C}$). This means that $t^* \vdash \neg \psi$ by the deduction theorem and the fact that inconsistency derives anything. Since $t^*$ is consistent, we have $\neg \psi \in t^*$ (otherwise $t^*$ would be inconsistent). Similarly, if $\neg \psi \notin t^*$, then by maximality $\psi \in t^*$.

Thus, $t^*$ is a complete (maximal consistent) extension of $t$.
\end{proof}

\begin{remark}[Avoiding Circularity]
\label{rmk:no-circularity}
In the proof above, we use only Zorn's Lemma and basic properties of consistency (that inconsistency can be witnessed by finite derivations). We do \emph{not} appeal to the Compactness Theorem, thus avoiding any circularity when Lindenbaum's Lemma is used as a step in proving the Completeness Theorem, which in turn is sometimes used to prove Compactness.
\end{remark}

\subsection{Maximal Consistent Extension}
\begin{lemma}\label{lem:max_extension}
Every consistent first-order theory $t$ (over a countable language) can be extended to a maximal consistent theory $t^*$ by a fixed sequential procedure (or by Zorn's Lemma). Consequently, for every sentence $\psi$, either $\psi \in t^*$ or $\neg \psi \in t^*$.
\end{lemma}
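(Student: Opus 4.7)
The plan is to present the fixed sequential procedure explicitly, since the Zorn's Lemma approach is already handled by Lemma~\ref{lem:lindenbaum}. For the functorial applications in Section~\ref{sec:mainproof}, the sequential approach is preferable because it produces a canonical (rather than merely existing) maximal extension once an enumeration is fixed.

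First I would fix a canonical enumeration $(\psi_n)_{n \in \mathbb{N}}$ of all sentences of the countable language (in our context, $\mathcal{L}_H$); such an enumeration exists because the set of sentences is countable. Set $t_0 := t$, and at stage $n$ define
\[
t_{n+1} := \begin{cases} t_n \cup \{\psi_n\} & \text{if } t_n \cup \{\psi_n\} \text{ is consistent}, \\ t_n \cup \{\neg\psi_n\} & \text{otherwise}. \end{cases}
\]
Finally, set $t^* := \bigcup_{n \in \mathbb{N}} t_n$. The three points to verify are: (i) consistency is preserved at each stage; (ii) $t^*$ is consistent; (iii) $t^*$ is complete. For (i), if $t_n$ is consistent but $t_n \cup \{\psi_n\}$ is inconsistent, then $t_n \vdash \neg\psi_n$; were $t_n \cup \{\neg\psi_n\}$ also inconsistent, we would have $t_n \vdash \psi_n$, contradicting consistency of $t_n$. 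Hence at least one branch succeeds. For (ii), any inconsistency of $t^*$ would already occur in a finite subset, hence in some $t_n$, contradicting (i). For (iii), every sentence is some $\psi_n$, and by construction $\psi_n \in t_{n+1} \subseteq t^*$ or $\neg\psi_n \in t_{n+1} \subseteq t^*$.

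The main subtlety, and the reason to prefer the sequential procedure for this paper, is not technical but conceptual: the functoriality of $F$ in Section~\ref{sec:mainproof} requires the assignment $t \mapsto t^*$ to be deterministic, so that a translation $f: t_1 \to t_2$ transports the enumeration-driven construction of $t_1^*$ to that of $t_2^*$ compatibly. Zorn's Lemma supplies a maximal extension but no canonical choice of one; fixing the enumeration $(\psi_n)$ \emph{globally} (independently of $t$) yields a uniquely specified $t^*$ per theory, which is what the later naturality arguments require. If the Henkin witness axioms must be incorporated (as in the main theorem), one interleaves, at each stage, the addition of the axiom $\exists x\,\varphi(x) \to \varphi(c_{\langle\exists x\,\varphi,k\rangle})$ for the next pending existential formula, using a fresh index $k$ to avoid constant clashes; this preserves consistency because the introduction of Henkin witnesses yields a conservative extension, a standard fact provable by a model-theoretic or proof-theoretic argument that does not invoke Compactness and so avoids circularity (cf.\ Remark~\ref{rmk:no-circularity}).
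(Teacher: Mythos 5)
Your construction is the same as the paper's: fix an enumeration $(\psi_n)$ of all sentences, decide each $\psi_n$ in turn by adding whichever of $\psi_n$, $\neg\psi_n$ keeps the theory consistent, and take $t^* = \bigcup_n t_n$. The one substantive difference is at the step establishing consistency of the union: you argue directly that any derivation of a contradiction from $t^*$ uses only finitely many sentences and hence would already appear in some $t_N$, whereas the paper at this point invokes the Compactness Theorem. Your version is preferable: it relies only on the finiteness of proofs, exactly as in the paper's own proof of Lemma~\ref{lem:lindenbaum}, and thereby respects the non-circularity concern the paper itself raises in Remark~\ref{rmk:no-circularity} --- appealing to Compactness inside a lemma that feeds the Completeness Theorem (from which Compactness is often derived) is an avoidable blemish in the paper's argument. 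Your closing remarks on determinism of the assignment $t \mapsto t^*$ and on interleaving Henkin witness axioms go beyond what the lemma asks, but they correctly anticipate how the lemma is used in Section~\ref{sec:mainproof}; just note that the conservativity of adding Henkin witnesses is asserted rather than proved, which is acceptable here since the lemma as stated does not require it.
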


\paragraph{Intuitive Explanation.}
This is essentially the constructive version of Lindenbaum's process. We list every sentence in the countable language and decide it stage by stage, yielding a complete (maximal consistent) theory.

\begin{proof}[Proof of Lemma~\ref{lem:max_extension}]
We work in classical first-order logic (hence the law of excluded middle holds). Let $t$ be a consistent first-order theory over a countable language $\mathcal{L}$. Since $\mathcal{L}$ is countable, we can enumerate all sentences of $\mathcal{L}$ as
\[
\psi_0,\, \psi_1,\, \psi_2,\, \dots.
\]
We now define a sequence of theories $\{t_n\}_{n\in\mathbb{N}}$ by recursion:
\begin{enumerate}
    \item Set $t_0 \coloneqq t$.
    \item For each $n\in\mathbb{N}$, assume that $t_n$ is consistent. Consider the sentence $\psi_n$. Define
    \[
    t_{n+1} \coloneqq
    \begin{cases}
    t_n \cup \{\psi_n\} & \text{if } t_n \cup \{\psi_n\} \text{ is consistent}, \\[1mm]
    t_n \cup \{\neg \psi_n\} & \text{if } t_n \cup \{\psi_n\} \text{ is inconsistent}.
    \end{cases}
    \]
    In the second case, note that since $t_n$ is consistent and $t_n \cup \{\psi_n\}$ is inconsistent, by classical logic (and the fact that a contradiction implies any statement) we have that $\psi_n$ is not provable in $t_n$, so $t_n \cup \{\neg \psi_n\}$ remains consistent.
\end{enumerate}
Now, define
\[
t^* \;=\; \bigcup_{n\in\mathbb{N}} t_n.
\]

We verify two properties:

\textbf{(1) Consistency:}  
Let $S$ be any finite subset of $t^*$. Since $S$ is finite, there exists some $N\in\mathbb{N}$ such that $S\subseteq t_N$. By the construction, each $t_N$ is consistent. Hence every finite subset of $t^*$ is consistent. By the Compactness Theorem (see, e.g., \cite[Section~3]{barreto2503}), it follows that $t^*$ itself is consistent.

\textbf{(2) Maximality (Completeness):}  
Let $\psi$ be any sentence in $\mathcal{L}$. By the enumeration, there exists $k\in\mathbb{N}$ such that $\psi=\psi_k$. By the recursive construction, at stage $k+1$ we have added either $\psi_k$ or $\neg \psi_k$ to $t_{k+1}$. Since $t_{k+1}\subseteq t^*$, it follows that either $\psi\in t^*$ or $\neg \psi\in t^*$.
Thus, $t^*$ is complete and hence maximal consistent.

Therefore, $t^*$ is a maximal consistent extension of $t$.
\end{proof}

\newpage
\subsection{Congruence of Equality}
\begin{lemma}\label{lem:congruence}
Let $t^*$ be a complete theory. For any $n$-ary function symbol $f$ and terms $s_1,\dots,s_n$ and $s'_1,\dots,s'_n$, if 
\[
t^* \vdash s_i = s'_i \quad \text{for all } i=1,\dots,n,
\]
then 
\[
t^* \vdash f(s_1,\dots,s_n) = f(s'_1,\dots,s'_n).
\]
\end{lemma}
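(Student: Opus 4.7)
The lemma is a direct instance of the standard congruence schema for equality in first-order logic with equality, so the plan is to reduce it to one application of a logical axiom plus $n$ applications of modus ponens. The hypothesis of completeness of $t^{*}$ is not actually needed for this congruence result: what matters is that $t^{*}$ is a theory in first-order logic with equality, so every logical axiom is provable from $t^{*}$.

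First, I would recall the standard equality axiom scheme: for every $n$-ary function symbol $f$ in $\mathcal{L}$, the universal closure
\[
\forall x_{1}\dots \forall x_{n}\,\forall y_{1}\dots \forall y_{n}\;\Bigl[\bigwedge_{i=1}^{n}(x_{i}=y_{i})\;\to\; f(x_{1},\dots,x_{n})=f(y_{1},\dots,y_{n})\Bigr]
\]
is a logical axiom. Since logical axioms are provable from any theory, in particular $t^{*}$ proves this sentence. Next, I would apply the universal instantiation rule $n$ times to specialize the universally quantified variables to the concrete terms $s_{1},\dots,s_{n}$ and $s'_{1},\dots,s'_{n}$, obtaining
\[
t^{*}\;\vdash\;\Bigl[\bigwedge_{i=1}^{n}(s_{i}=s'_{i})\Bigr]\;\to\; f(s_{1},\dots,s_{n})=f(s'_{1},\dots,s'_{n}).
\]

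Finally, I would combine this conditional with the hypothesis that $t^{*}\vdash s_{i}=s'_{i}$ for every $i$: by conjunction introduction we obtain $t^{*}\vdash\bigwedge_{i=1}^{n}(s_{i}=s'_{i})$, and then a single application of modus ponens yields
\[
t^{*}\;\vdash\;f(s_{1},\dots,s_{n})=f(s'_{1},\dots,s'_{n}),
\]
which is the desired conclusion.

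There is essentially no substantive obstacle here; the only thing to be careful about is to be explicit about which axiomatization of first-order logic with equality one is using, since the congruence principle for function symbols is sometimes taken as a primitive axiom scheme and sometimes derived from more basic reflexivity and substitution rules. In the latter case, one would instead prove the claim by induction on $n$, using at each step the substitution axiom $s_{i}=s'_{i}\to\bigl(f(\dots,s_{i},\dots)=f(\dots,s'_{i},\dots)\bigr)$ together with transitivity of equality; either route is routine and completely syntactic.
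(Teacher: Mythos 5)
Your proposal is correct and follows essentially the same route as the paper's proof: invoke the congruence axiom scheme for function symbols as a logical axiom, instantiate it at the terms $s_i, s'_i$, form the conjunction of the hypotheses, and apply modus ponens. Your observation that completeness of $t^*$ is not actually needed is accurate (the paper's appeal to completeness to justify that $t^*$ proves logical validities is superfluous, since any theory proves all logical axioms).
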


\paragraph{Intuitive Explanation.}
Equality is preserved under function symbols, i.e.\ the congruence property. If $s_i$ equals $s'_i$ for all $i$, then $f(s_1,\dots,s_n)$ equals $f(s'_1,\dots,s'_n)$ under the theory $t^*$.

\begin{proof}[Proof of Lemma~\ref{lem:congruence}]
We work in a first-order logic system with equality, where the standard axioms of equality are assumed (see, e.g., \cite[Section~2]{barreto2503}). In particular, for every $n$-ary function symbol $f$ and for all terms $x_1,\dots,x_n, y_1,\dots,y_n$, the following \emph{congruence axiom} is a logical axiom:
\[
\forall x_1\cdots\forall x_n\forall y_1\cdots\forall y_n\,\Bigl( (x_1=y_1 \land \cdots \land x_n=y_n) \to f(x_1,\dots,x_n)=f(y_1,\dots,y_n)\Bigr).
\]
Since $t^*$ is a complete theory, it includes all logical validities, in particular, every instance of the congruence axiom.

Now, assume that for each $i=1,\dots,n$ we have 
\[
t^* \vdash s_i = s'_i.
\]
Then, by instantiating the congruence axiom with $x_i := s_i$ and $y_i := s'_i$ for each $i$, we obtain
\[
t^* \vdash (s_1 = s'_1 \land \cdots \land s_n = s'_n) \to f(s_1,\dots,s_n) = f(s'_1,\dots,s'_n).
\]
Moreover, since $t^*$ proves each $s_i = s'_i$, we have 
\[
t^* \vdash s_1 = s'_1 \land \cdots \land s_n = s'_n.
\]
Finally, by applying modus ponens with the above implication, it follows that
\[
t^* \vdash f(s_1,\dots,s_n) = f(s'_1,\dots,s'_n).
\]
This completes the proof.
\end{proof}

\newpage
\subsection{Term Equivalence Relation}
\begin{lemma}[Term Equivalence Relation]\label{lem:term_eq_relation}
Let $t^*$ be a maximal consistent theory extending a consistent theory $t$. Define a relation $\sim$ on the set of terms $\mathcal{t}$ by
\[
s \sim s' \;\;\Longleftrightarrow\;\; t^* \vdash s = s'.
\]
Then $\sim$ is an equivalence relation; that is, it is reflexive, symmetric, and transitive.
\end{lemma}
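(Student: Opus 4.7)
The plan is to verify each of the three defining properties of an equivalence relation by reducing them to provability of equality sentences in $t^{*}$, and then to invoke the standard equality axioms of first-order logic with equality. Since these axioms are logical validities, they belong to every theory, and in particular to the maximal consistent extension $t^{*}$. The argument is therefore almost entirely syntactic: each property of $\sim$ corresponds to an instance of an equality axiom, together with one application of modus ponens.

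First, for \emph{reflexivity}, I would take the logical axiom $\forall x\,(x = x)$, instantiate $x$ by the term $s$ (using universal instantiation), and conclude $t^{*} \vdash s = s$, so $s \sim s$. For \emph{symmetry}, assuming $t^{*} \vdash s = s'$, I would invoke the equality axiom $\forall x \forall y\,(x = y \to y = x)$, which is derivable from reflexivity together with the substitution schema (and hence is a logical validity contained in $t^{*}$); instantiating $x, y$ by $s, s'$ and applying modus ponens yields $t^{*} \vdash s' = s$, i.e.\ $s' \sim s$. For \emph{transitivity}, assuming $t^{*} \vdash s = s'$ and $t^{*} \vdash s' = s''$, I would invoke the logical axiom $\forall x \forall y \forall z\,((x = y \wedge y = z) \to x = z)$; instantiating and applying modus ponens gives $t^{*} \vdash s = s''$, hence $s \sim s''$.

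There is essentially no obstacle here: the only point requiring any comment is the observation that $t^{*}$, being a consistent theory in first-order logic with equality, contains every logically valid sentence in $\mathcal{L}_H$, so each of the three equality axioms above is available for use. If one prefers a minimal axiomatization in which only reflexivity and the substitution schema
\[
\forall \bar{x}\forall \bar{y}\,\bigl((x_i = y_i) \to (\varphi(\ldots, x_i, \ldots) \leftrightarrow \varphi(\ldots, y_i, \ldots))\bigr)
\]
are primitive, then symmetry and transitivity follow by taking $\varphi$ to be $z = x_i$ (with $z$ suitably chosen) and applying the substitution schema once in each case; this is a routine exercise and does not affect the structure of the argument. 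The proof concludes by noting that the three verifications together establish that $\sim$ is an equivalence relation on the set of $\mathcal{L}_H$-terms, which is exactly the statement of the lemma.
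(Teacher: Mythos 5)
Your proposal is correct and follows essentially the same route as the paper's own proof: both verify reflexivity, symmetry, and transitivity directly from the standard equality axioms of first-order logic with equality, which are available in $t^{*}$ as logical validities, via instantiation and modus ponens. Your additional remark on deriving symmetry and transitivity from a minimal axiomatization (reflexivity plus the substitution schema) is a harmless elaboration that does not change the structure of the argument.
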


\paragraph{Intuitive Explanation.}
“Provable equality” in $t^*$ behaves as an equivalence relation (it is in fact a congruence on the term algebra).

\begin{proof}[Proof of Lemma~\ref{lem:term_eq_relation}]
We work in classical first-order logic with equality, assuming the standard axioms of equality (see, e.g., \cite[Section~2]{barreto2503}). Define a binary relation $\sim$ on the set of terms $\mathcal{t}$ by
\[
s \sim s' \quad \Longleftrightarrow \quad t^* \vdash s = s'.
\]
We now show that $\sim$ is an equivalence relation by verifying reflexivity, symmetry, and transitivity.

\textbf{Reflexivity:} For any term $s \in \mathcal{t}$, the axiom of equality guarantees that 
\[
t^* \vdash s = s.
\]
Hence, $s \sim s$.

\textbf{Symmetry:} Suppose that $s \sim s'$, that is, 
\[
t^* \vdash s = s'.
\]
By the symmetry axiom of equality, it follows that 
\[
t^* \vdash s' = s.
\]
Thus, $s' \sim s$.

\textbf{Transitivity:} Suppose that $s \sim s'$ and $s' \sim s''$, meaning 
\[
t^* \vdash s = s' \quad \text{and} \quad t^* \vdash s' = s''.
\]
Then, by the transitivity axiom of equality, we have 
\[
t^* \vdash s = s'',
\]
which implies $s \sim s''$.

Since $\sim$ is reflexive, symmetric, and transitive, it is an equivalence relation.
\end{proof}

\newpage
\subsection{Functoriality of Model Constructions}
\begin{lemma}\label{lem:functoriality_models}
Let $f: t_1 \to t_2$ be a morphism in the category $\mathbf{th}$ (a translation preserving provability). Then the model constructions via the Henkin method and the compactness method are functorial; that is, $f$ induces well-defined model homomorphisms 
\[
F(f): F(t_1) \to F(t_2) \quad \text{and} \quad G(f): G(t_1) \to G(t_2)
\]
with
\[
F(g \circ f) = F(g) \circ F(f),\quad F(\operatorname{id}_{t}) = \operatorname{id}_{F(t)},
\]
and similarly for $G$.
\end{lemma}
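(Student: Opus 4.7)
The plan is to treat the two functorial assignments $F$ and $G$ separately, define the induced morphisms on term-level generators, and then verify the composition and identity axioms by direct syntactic calculation. In both cases the key input is that $f$ preserves provability and commutes with logical structure (Remark~\ref{rmk:th-morphisms}); the only nontrivial question is how $f$ acts on the globally chosen set $H$ of Henkin constants and on the Henkin completions $t_1^*, t_2^*$.

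For $F(f)$, I would set $F(f)\bigl([s]_{t_1^*}\bigr) := [f(s)]_{t_2^*}$ for every term $s$ of $\mathcal{L}_H$, where $f$ is extended from formulas to terms by its compatibility with function symbols. Well-definedness comes from provability preservation: if $t_1^* \vdash s = s'$ then $t_2^* \vdash f(s) = f(s')$, so the quotient classes match by Lemma~\ref{lem:term_eq_relation}. The homomorphism property on function and relation symbols follows because $f$ respects each symbol (by definition for $\mathcal{L}$, by the global Henkin scheme for $H$); combined with Lemma~\ref{lem:congruence} this upgrades the map to an elementary embedding. Functoriality then reduces to the identities
\[
F(g \circ f)([s]) = [g(f(s))] = F(g)\bigl(F(f)([s])\bigr), \qquad F(\mathrm{id}_t)([s]) = [s],
\]
both of which are immediate from $(g \circ f)(s) = g(f(s))$ at the term level.

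For $G(f)$, every element of $G(t_1)$ is of the form $s^{G(t_1)}$ for some closed term $s$ built from $H$ — this is precisely what the Skolem-closure restriction in the definition of $G$ buys us. I would define $G(f)\bigl(s^{G(t_1)}\bigr) := f(s)^{G(t_2)}$ and argue well-definedness by noting that $s^{G(t_1)} = s'^{G(t_1)}$ amounts to $t_1^* \vdash s = s'$ (by the Truth Lemma for the Henkin-style Skolem closure inside the fixed ultraproduct or saturated model), which translates under $f$ to $t_2^* \vdash f(s) = f(s')$, hence $f(s)^{G(t_2)} = f(s')^{G(t_2)}$. Preservation of function symbols, relation symbols, and hence the elementary-embedding property, follows by the same argument applied to arbitrary formulas, using Łoś's Theorem or the saturation truth lemma depending on which canonical realization has been fixed. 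Functoriality again reduces to the analogous term-level identities.

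The hard part, and the place where the hypothesis ``$f$ respects the global Henkin expansion'' does real work, will be verifying that an arbitrary morphism $f: t_1 \to t_2$ of $\mathbf{th}$ actually lifts to a provability-preserving translation $f^*: t_1^* \to t_2^*$ between the two Henkin completions inside $\mathcal{L}_H$. This is not automatic: one must check that $f$ sends each Henkin constant $c_{\langle\exists x\varphi,k\rangle}$ to a canonical image compatible with the witness axiom $\exists x\varphi(x) \to \varphi(c_{\langle\exists x\varphi,k\rangle})$, and that the sequential completion procedure of Lemma~\ref{lem:max_extension} is intertwined by $f$ — which succeeds precisely because the enumeration of sentences and the assignment of witness constants are fixed globally once and for all, and $f$ commutes with the logical connectives and quantifiers appearing in that enumeration. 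Once this compatibility is settled, both $F(f)$ and $G(f)$ are well-defined homomorphisms, and the functoriality equations hold by the term-level calculations sketched above.
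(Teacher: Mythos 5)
Your construction of $F(f)$ is exactly the paper's: the same definition $F(f)([s]) = [f(s)]$, the same well-definedness argument from provability preservation, and the same term-level verification of $F(g\circ f)=F(g)\circ F(f)$ and $F(\mathrm{id}_t)=\mathrm{id}_{F(t)}$. Where you diverge is on the $G$ side and in what you choose to make explicit. The paper's proof of this lemma defines $G(f)$ by appealing to the ``invariance of the canonical construction under translations'' without saying what the elements of $G(t_1)$ are; you instead ground $G(f)$ in the Skolem-closure description (every element is $s^{G(t_1)}$ for a closed $\mathcal{L}_H$-term $s$ over $H$) and reduce well-definedness to the syntactic equivalence $t_1^*\vdash s=s'$ via the truth lemma. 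This is more concrete and is actually the description of $G$ used in Theorem~\ref{thm:main}, so your route is the more defensible one. You also isolate the genuine crux that the paper's proof passes over silently: the morphism $f$ is only assumed to preserve provability from $t_1$ to $t_2$, whereas both well-definedness arguments need the stronger statement that $t_1^*\vdash\varphi$ implies $t_2^*\vdash f(\varphi)$ for the \emph{completions}. Be aware, though, that your proposed resolution --- that the globally fixed enumeration intertwines the two Lindenbaum procedures --- does not obviously close this gap: the sequential procedure for $t_2$ decides the sentences $\psi_n$ themselves, not their images $f(\psi_n)$, so nothing forces $t_2^*$'s verdict on $f(\psi_n)$ to agree with $t_1^*$'s verdict on $\psi_n$. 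This difficulty is inherited from the paper (which simply asserts the implication for the starred theories), so it is not a defect unique to your write-up, but a complete proof would need either to restrict the morphisms of $\mathbf{th}$ to those that already intertwine the completions, or to build $t_2^*$ relative to the $f$-image of $t_1^*$.
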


\paragraph{Intuitive Explanation.}
A “translation” $f$ between theories induces a natural “translation” between their Henkin models or compactness models, preserving the requisite structure.

\begin{proof}[Proof of Lemma~\ref{lem:functoriality_models}]
We work in classical first-order logic and assume that our morphisms in the category $\mathbf{th}$ (of theories) are translations preserving both membership and provability. We show that the two canonical model constructions—via the Henkin method and via the compactness method—yield functors from $\mathbf{th}$ to the category $\mathbf{mod}$ of models.

\textbf{For the Henkin Construction:}  
Recall that for each consistent theory $t$, the Henkin model $F(t)$ is constructed by first extending $t$ to a maximal consistent theory $t^*$ (via, e.g., the sequential procedure in Lemma~\ref{lem:max_extension}) and then forming the term model by taking the quotient of the term algebra with respect to the relation
\[
s \sim s' \iff t^* \vdash s = s'.
\]
Given a morphism (translation) $f: t_1 \to t_2$, define a map
\[
F(f): F(t_1) \to F(t_2)
\]
by setting
\[
F(f)([s]) \;=\; [f(s)],
\]
where $[s]$ denotes the equivalence class of the term $s$ in $F(t_1)$.

\emph{Well-definedness:} Suppose $[s]=[s']$ in $F(t_1)$; that is, 
\[
t_1^* \vdash s = s'.
\]
Since $f$ is a translation preserving provability, it follows that
\[
t_2^* \vdash f(s) = f(s'),
\]
so that $[f(s)] = [f(s')]$ in $F(t_2)$. Hence, $F(f)$ is well-defined.

\emph{Functoriality:}  
Let $f: t_1 \to t_2$ and $g: t_2 \to t_3$ be composable morphisms in $\mathbf{th}$. For any equivalence class $[s] \in F(t_1)$, we have
\[
F(g\circ f)([s]) = [ (g\circ f)(s) ] = [ g(f(s)) ].
\]
On the other hand,
\[
(F(g) \circ F(f))([s]) = F(g)(F(f)([s])) = F(g)([f(s)]) = [ g(f(s)) ].
\]
Thus, 
\[
F(g\circ f)=F(g)\circ F(f).
\]
Moreover, for the identity morphism $\operatorname{id}_t$ on any theory $t$, we have
\[
F(\operatorname{id}_t)([s]) = [\operatorname{id}(s)] = [s],
\]
so that $F(\operatorname{id}_t)$ is the identity on $F(t)$.

\textbf{For the Compactness Construction:}  
The model $G(t)$ is constructed via a canonical procedure (e.g., an ultraproduct construction using a fixed non-principal ultrafilter or a fixed saturation method) that is invariant under translations preserving provability (see \cite[Section~3]{barreto2503}). Given a morphism $f: t_1 \to t_2$, we define
\[
G(f): G(t_1) \to G(t_2)
\]
by mapping the canonical interpretation of any element (arising from a term $s$) in $G(t_1)$ to the canonical interpretation of $f(s)$ in $G(t_2)$. The invariance of the construction under such translations ensures that $G(f)$ is well-defined.  
Functoriality for $G$ is verified analogously: for composable morphisms $f: t_1\to t_2$ and $g: t_2\to t_3$, one has
\[
G(g\circ f)= G(g) \circ G(f),
\]
and
\[
G(\operatorname{id}_t) = \operatorname{id}_{G(t)}.
\]

Thus, both the Henkin construction $F$ and the compactness construction $G$ define functors from $\mathbf{th}$ to $\mathbf{mod}$.
\end{proof}

\newpage
\subsection{Uniqueness of Henkin Term Models}
\begin{lemma}\label{lem:uniqueness_henkin}
Given a consistent theory $t$, if $F_1(t)$ and $F_2(t)$ are two term models obtained via the same Henkin construction (with identical enumeration and Henkin constants), then $F_1(t)$ and $F_2(t)$ are isomorphic.
\end{lemma}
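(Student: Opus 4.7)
The plan is to observe that once the enumeration of formulas and the global Henkin-constant set $H$ are fixed, the whole Henkin construction is deterministic, so $F_1(t)$ and $F_2(t)$ are essentially the same object and the ``isomorphism'' is the canonical identification of equivalence classes. Concretely, I would proceed in three steps: first identify the maximal consistent Henkin theories underlying the two models, then identify the equivalence relations on terms, and finally assemble the isomorphism as an $\mathcal{L}_H$-structure map.

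\textbf{Step 1 (uniqueness of $t^*$).} Because the enumeration $\psi_0,\psi_1,\dots$ of $\mathcal{L}_H$-sentences and the Henkin-constant scheme are shared, the recursive procedure of Lemma~\ref{lem:max_extension} defines a single chain $t = t_0 \subseteq t_1 \subseteq \cdots$ whose union $t^*$ is uniquely determined. Hence the maximal consistent Henkin theories used to build $F_1(t)$ and $F_2(t)$ coincide: call it $t^*$. By Lemma~\ref{lem:term_eq_relation}, the relation $s \sim s' \iff t^* \vdash s = s'$ on $\mathrm{Term}(\mathcal{L}_H)$ is the same equivalence relation used in both constructions.

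\textbf{Step 2 (definition of the isomorphism).} Define
\[
\Phi : F_1(t) \longrightarrow F_2(t), \qquad \Phi([s]_1) \;=\; [s]_2,
\]
where $[s]_i$ denotes the $\sim$-class of $s$ as realized in $F_i(t)$. Well-definedness and injectivity are immediate: $[s]_1 = [s']_1 \iff t^* \vdash s = s' \iff [s]_2 = [s']_2$. Surjectivity holds because every element of the term model $F_2(t)$ is, by construction, the class of some closed term $s \in \mathrm{Term}(\mathcal{L}_H)$.

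\textbf{Step 3 (preservation of $\mathcal{L}_H$-structure).} For each constant $c \in \mathcal{L}_H$ one has $\Phi(c^{F_1(t)}) = \Phi([c]_1) = [c]_2 = c^{F_2(t)}$. For an $n$-ary function symbol $f$ and terms $s_1,\dots,s_n$,
\[
\Phi\bigl(f^{F_1(t)}([s_1]_1,\dots,[s_n]_1)\bigr) = \Phi([f(s_1,\dots,s_n)]_1) = [f(s_1,\dots,s_n)]_2 = f^{F_2(t)}\bigl(\Phi([s_1]_1),\dots,\Phi([s_n]_1)\bigr),
\]
using Lemma~\ref{lem:congruence} to guarantee that $f$ descends to the quotient. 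For relation symbols, one invokes the truth lemma for Henkin models: $F_1(t) \models R([s_1]_1,\dots,[s_n]_1) \iff t^* \vdash R(s_1,\dots,s_n) \iff F_2(t) \models R([s_2]_2,\dots,[s_n]_2)$, so $\Phi$ preserves and reflects atomic formulas and is therefore an isomorphism of $\mathcal{L}_H$-structures.

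\textbf{Main obstacle.} The only subtle point is conceptual rather than technical: one must articulate what latitude is left in ``the same Henkin construction'' once the enumeration and $H$ are fixed. The lemma essentially says that the sole remaining freedom is the set-theoretic encoding of the quotient (e.g.\ choosing representatives, or packaging $F(t)$ as the set of $\sim$-classes versus as a choice of canonical representatives), and this freedom is absorbed by the canonical map $\Phi$. If one interprets the construction strictly, $F_1(t)$ and $F_2(t)$ are literally equal and $\Phi$ is the identity; in all other presentations, the argument above produces the unique canonical isomorphism.
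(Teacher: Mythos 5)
Your proposal is correct and follows essentially the same route as the paper's proof: both argue that the fixed enumeration and Henkin-constant scheme force the two constructions to produce the same maximal consistent theory $t^*$, hence the same term algebra and equivalence relation, so the canonical map $[s]_1 \mapsto [s]_2$ is a well-defined bijective structure map. If anything, your version is slightly more complete, since you also verify preservation of relation symbols via the truth lemma, whereas the paper only checks function symbols explicitly.
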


\paragraph{Intuitive Explanation.}
Using the \emph{same} deterministic Henkin procedure (same enumeration of formulas, same introduction of constants) yields the \emph{same} maximal theory $t^*$ and hence the same quotient term algebra, up to isomorphism.

\begin{proof}[Proof of Lemma~\ref{lem:uniqueness_henkin}]
Let $t$ be a consistent theory and suppose that $F_1(t)$ and $F_2(t)$ are two term models obtained via the Henkin construction using the same fixed enumeration of sentences and the same rule for introducing Henkin constants. By the construction, both models are defined as the quotient of the term algebra $\mathcal{T}$ (formed over the language extended with Henkin constants) by the equivalence relation
\[
s \sim s' \iff t^* \vdash s = s',
\]
where $t^*$ is the unique maximal consistent extension of $t$ obtained by the fixed sequential procedure (see Lemma~\ref{lem:max_extension}). 

Since the enumeration and the rule for introducing Henkin constants are identical in both constructions, the maximal consistent theory $t^*$ is the same for both $F_1(t)$ and $F_2(t)$, and hence the underlying term algebra $\mathcal{T}$ and the equivalence relation $\sim$ are identical.

Define a map 
\[
\phi : F_1(t) \to F_2(t)
\]
by
\[
\phi([s]) = [s],
\]
where $[s]$ denotes the equivalence class of a term $s$ in the respective quotient. This map is well-defined since if $[s] = [s']$ in $F_1(t)$ (i.e., $t^* \vdash s = s'$), then the same equality holds in $F_2(t)$ by the identical construction. Moreover, $\phi$ is clearly bijective and preserves the interpretation of function symbols (i.e., for any $n$-ary function symbol $f$,
\[
\phi(f_{F_1(t)}([s_1],\dots,[s_n])) = \phi([f(s_1,\dots,s_n)]) = [f(s_1,\dots,s_n)] = f_{F_2(t)}([s_1],\dots,[s_n])).
\]
Thus, $\phi$ is an isomorphism between $F_1(t)$ and $F_2(t)$.
\end{proof}

\newpage
\subsection{Compactness Theorem}
\begin{lemma}\label{lem:compactness}
If every finite subset of a theory $t$ is satisfiable, then $t$ is satisfiable. In particular, using a fixed non-principal ultrafilter or a fixed saturation procedure, one can construct a model $G(t)$ of $t$ which is unique up to isomorphism.
\end{lemma}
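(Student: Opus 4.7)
The plan is to prove satisfiability via an ultraproduct construction and then address the uniqueness clause by invoking the global choices fixed in Section~\ref{subsec:choice-ultraproduct}. First I take the index set $I$ to be the collection of all finite subsets of $t$, and for each $i \in I$ I select (using the Axiom of Choice) a model $M_i$ witnessing the satisfiability of $i$. For each $\varphi \in t$, set $J_\varphi := \{i \in I : \varphi \in i\}$; the family $\{J_\varphi\}_{\varphi \in t}$ has the finite intersection property, since any $\{\varphi_1,\dots,\varphi_n\}$ itself lies in $J_{\varphi_1} \cap \cdots \cap J_{\varphi_n}$.

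Next I extend the filter generated by $\{J_\varphi : \varphi \in t\}$ to a non-principal ultrafilter $\mathcal{U}$ on $I$ via the ultrafilter lemma, and form the ultraproduct $\prod_{i \in I} M_i / \mathcal{U}$. An application of {\L}o{\'s}'s Theorem then yields the satisfiability claim: for each $\varphi \in t$ one has $J_\varphi \subseteq \{i : M_i \models \varphi\}$ (since $i \in J_\varphi$ forces $\varphi \in i$, hence $M_i \models \varphi$), and $J_\varphi \in \mathcal{U}$, so $\prod_{i} M_i / \mathcal{U} \models \varphi$. Defining $G(t)$ as this ultraproduct (or, in keeping with Theorem~\ref{thm:main}, as the Skolem closure generated by the Henkin constants) gives a model of $t$.

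For the uniqueness clause, I would appeal to the global-choice convention of Remark~\ref{rmk:global-choice}: once the index scheme $I$, the canonical assignment $i \mapsto M_i$, and the ultrafilter $\mathcal{U}$ are fixed once and for all (rather than re-chosen per theory), the ultraproduct is determined without further ambiguity, and hence unique up to isomorphism. For the saturation alternative, I would fix an enumeration of partial types and a staged realization procedure; uniqueness of saturated models of a given cardinality then follows from the standard back-and-forth argument.

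The main obstacle is the uniqueness clause in its naive reading. Different witnessing families or different ultrafilters generally yield non-isomorphic ultraproducts, so \emph{uniqueness up to isomorphism} is honest only under the explicit proviso that all background data are fixed globally; I would therefore stress this dependence on the conventions of Section~\ref{subsec:choice-ultraproduct} rather than claim a choice-free uniqueness. A secondary concern is that both the selection of the $M_i$ and of $\mathcal{U}$ are irreducibly non-constructive, so canonicity of $G(t)$ is canonicity only modulo these uses of the Axiom of Choice; I would flag this explicitly but not attempt to eliminate it.
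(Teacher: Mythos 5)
Your proof is correct, and for the first half it takes a genuinely different --- and more self-contained --- route than the paper. The paper's own proof of the satisfiability claim simply \emph{cites} the Compactness Theorem (from an external reference) and only then describes the two canonical constructions of $G(t)$; since the satisfiability claim \emph{is} the Compactness Theorem, that citation is essentially circular as a proof. You instead give the standard ultraproduct derivation from scratch: index by finite subsets of $t$, verify the finite intersection property of the sets $J_\varphi$, extend to an ultrafilter, and apply \L{}o\'{s}'s Theorem. This buys an actual argument where the paper has only a pointer. Two small remarks. First, the adjective ``non-principal'' in your ultrafilter is automatic only when $t$ is infinite (the intersection of all $J_\varphi$ is then empty); when $t$ is finite the extending ultrafilter may be principal, but the ultraproduct is then just one of the $M_i$ and still models $t$, so nothing breaks --- you should simply not insist on non-principality. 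Second, on the uniqueness clause you and the paper take the same line (uniqueness only relative to the globally fixed index scheme, witnessing family, and ultrafilter or saturation procedure, per Remark~\ref{rmk:global-choice}), but your explicit acknowledgement that different ultrafilters generally yield non-isomorphic ultraproducts, so that ``unique up to isomorphism'' degenerates to ``the output of a fixed deterministic procedure,'' is more careful than the paper's phrasing and is worth keeping.
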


\paragraph{Intuitive Explanation.}
The compactness theorem is fundamental: no finite subset is contradictory implies the entire theory is consistent/satisfiable. Fixing a single ultrafilter (or a single saturation method) yields a canonical model $G(t)$.

\begin{proof}[Proof of Lemma~\ref{lem:compactness}]
Assume that every finite subset of a theory $t$ is satisfiable. By the Compactness Theorem (see, e.g., \cite[Section~3]{barreto2503}), it follows that the entire theory $t$ is satisfiable; that is, there exists a model $M$ such that $M \models t$.

To construct a canonical model $G(t)$ of $t$, we proceed in one of the following fixed ways:

\textbf{Method 1: Ultraproduct Construction.}  
Let $\{M_i\}_{i\in I}$ be a collection of models such that for every finite subset $t_0\subseteq t$, there is some $i\in I$ with $M_i \models t_0$. Fix a non-principal ultrafilter $\mathcal{U}$ on the index set $I$. By Łoś's Theorem, the ultraproduct
\[
G(t) \;=\; \prod_{i\in I} M_i \Big/ \mathcal{U}
\]
is a model of $t$. Since the choice of $\mathcal{U}$ is fixed, the resulting model $G(t)$ is canonical in the sense that any two such ultraproducts (with the fixed ultrafilter) are isomorphic.

\textbf{Method 2: Saturation Procedure.}  
Alternatively, extend $t$ to a complete theory $t^*$ (using, e.g., Lindenbaum's Lemma). Then, by applying a fixed saturation procedure, one can construct a saturated model $G(t)$ of $t^*$. In a countable language, the Back-and-Forth Lemma (see, e.g., \cite[Section~3]{barreto2503}) ensures that any two saturated models of a complete theory are isomorphic. Thus, this construction of $G(t)$ is unique up to isomorphism.

In either method, the fixed choice of ultrafilter or saturation procedure guarantees that the constructed model $G(t)$ is canonical (i.e., unique up to isomorphism).
\end{proof}

\newpage
\subsection{Back-and-Forth Lemma}
\begin{lemma}\label{lem:back_and_forth}
Any two countable saturated models of a complete theory are isomorphic via a back-and-forth construction.
\end{lemma}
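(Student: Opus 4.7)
The plan is to construct an isomorphism $h : M \to N$ between two countable saturated models of a complete theory $T$ as the union of an ascending chain of finite partial elementary maps, following the standard back-and-forth method. First I would fix enumerations $|M| = \{a_0, a_1, \dots\}$ and $|N| = \{b_0, b_1, \dots\}$, and begin with $f_0 = \emptyset$; this is vacuously a partial elementary map because completeness of $T$, together with $M, N \models T$, implies $M \equiv N$.

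At each even stage (the \emph{forth} step), given a finite partial elementary map $f_{2k} : A_{2k} \to B_{2k}$ between $M$ and $N$, I would let $a$ be the element of least index in $|M| \setminus A_{2k}$, consider the complete type $p(x) = \operatorname{tp}^{M}(a / A_{2k})$, and transport it along $f_{2k}$ to obtain a consistent complete type over the finite parameter set $B_{2k} \subseteq N$. By $\omega$-saturation of $N$, this transported type is realized by some $b \in N$, and I extend $f_{2k}$ to $f_{2k+1}$ by sending $a \mapsto b$. At each odd stage I perform the symmetric \emph{back} step using $\omega$-saturation of $M$, picking the least-indexed $b \in |N| \setminus B_{2k+1}$ and finding a preimage in $M$ realizing the appropriate pulled-back type.

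The main technical point—more a verification than a true obstacle—is checking that each extension $f_{n+1}$ remains partial elementary. This holds precisely because the newly added element realizes the complete type over the finite parameter set on which $f_n$ is already elementary; hence no first-order formula with parameters changes truth value when the new pair is adjoined. Finally I would define $h = \bigcup_n f_n$; the forth steps together with the enumeration of $M$ guarantee $\operatorname{dom}(h) = |M|$, the back steps together with the enumeration of $N$ guarantee $\operatorname{range}(h) = |N|$, and since elementarity is witnessed on finite tuples, the elementarity of each $f_n$ transfers to $h$. Thus $h : M \to N$ is an elementary bijection, i.e., an isomorphism of $\mathcal{L}$-structures modelling $T$, which is what the lemma asserts.
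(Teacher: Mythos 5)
Your proposal is correct and follows essentially the same back-and-forth construction as the paper: enumerate both domains, build an increasing chain of finite partial elementary maps using saturation to realize types at each forth and back step, and take the union. If anything, your write-up is slightly more careful than the paper's, since you explicitly transport the type $\operatorname{tp}^{M}(a/A_{2k})$ along $f_{2k}$ to a type over $B_{2k}$ before invoking saturation of $N$ (the paper's version leaves the parameters unsubstituted), and you note that $f_0=\emptyset$ is elementary because completeness of $T$ gives $M\equiv N$.
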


\paragraph{Intuitive Explanation.}
We enumerate elements of both models and build a partial isomorphism that extends “forth” and “back,” ensuring that every element is eventually matched. This standard argument shows the models are isomorphic.

\begin{proof}[Proof of Lemma~\ref{lem:back_and_forth}]
Let $M$ and $N$ be two countable saturated models of a complete theory $T$. (Recall that a model is \emph{saturated} if every type over a finite subset of its domain that is consistent with $T$ is realized in the model.) Since $M$ and $N$ are countable, we may enumerate their domains as 
\[
M = \{m_0, m_1, m_2, \dots\} \quad \text{and} \quad N = \{n_0, n_1, n_2, \dots\}.
\]

We will construct, by induction, a sequence of finite partial isomorphisms
\[
f_0 \subseteq f_1 \subseteq f_2 \subseteq \cdots,
\]
where each $f_k: A_k \to B_k$ is an isomorphism between finite subsets $A_k \subseteq M$ and $B_k \subseteq N$. Define the function
\[
f = \bigcup_{k \in \mathbb{N}} f_k.
\]
We now describe the construction in detail.

\textbf{Initialization:}  
Let $f_0$ be the empty map (i.e., $A_0 = \varnothing$ and $B_0 = \varnothing$).

\textbf{Inductive Step:}  
Assume that for some $k \in \mathbb{N}$, we have constructed a finite partial isomorphism 
\[
f_k: A_k \to B_k.
\]
We extend $f_k$ to $f_{k+1}$ in two alternating substeps:

\emph{(Forth Step):}  
If there exists an element $m \in M \setminus A_k$, consider the \emph{type} of $m$ over the finite set $A_k$, i.e., the set 
\[
\operatorname{tp}(m/A_k) = \{ \varphi(x, a_1,\dots,a_n) \mid a_1,\dots,a_n \in A_k \text{ and } M \models \varphi(m, a_1,\dots,a_n) \}.
\]
Since $M$ and $N$ are models of the complete theory $T$ and $N$ is saturated, this type (which is consistent with $T$) is realized in $N$. Thus, there exists an element $n \in N$ such that $N \models \varphi(n, a_1,\dots,a_n)$ for every formula $\varphi(x, a_1,\dots,a_n)$ in $\operatorname{tp}(m/A_k)$. It follows that the extension 
\[
f_k \cup \{(m, n)\}
\]
remains a partial isomorphism. Define $f_{k+1}$ to be this extension.

\emph{(Back Step):}  
Similarly, if there exists an element $n \in N \setminus B_k$, consider the type of $n$ over $B_k$ in $N$. By the saturation of $M$, there exists an element $m \in M$ such that the type of $n$ over $B_k$ is realized in $M$. Then, extending $f_k$ by including the pair $(m, n)$ yields a partial isomorphism. Update $f_{k+1}$ accordingly.

By alternating these steps (ensuring that in each step at least one new element from either $M$ or $N$ is added), every element of $M$ and $N$ will eventually appear in the domain and range of some $f_k$. Finally, define
\[
f = \bigcup_{k \in \mathbb{N}} f_k.
\]
Since each $f_k$ is a partial isomorphism and the union of an increasing chain of partial isomorphisms is itself an isomorphism (by the completeness of the theory $T$ and the preservation of the structure on finite subsets), it follows that $f$ is an isomorphism from $M$ onto $N$. For further details, see, e.g., \cite[Section~4]{barreto2503}.
\end{proof}

\subsection{Isomorphism between Model Constructions}
\begin{lemma}[Isomorphism between Model Constructions]\label{lem:isomorphism}
For every consistent theory $t$ (in a countable language), the models $F(t)$ and $G(t)$—constructed via the Henkin and compactness methods, respectively—are isomorphic in the category $\mathbf{mod}$.
\end{lemma}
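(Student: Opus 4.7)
The plan is to define $\eta_t:F(t)\to G(t)$ by $\eta_t([s]):=s^{G(t)}$—the interpretation in $G(t)$ of any closed $\mathcal{L}_H$-term representing the equivalence class $[s]$—and then verify that this map is a bijective elementary embedding, i.e.\ an isomorphism in $\mathbf{mod}$. A preliminary calibration of the compactness construction of $G(t)$ is needed: the ambient model $M$ should satisfy the full Henkin expansion $t^{*}$, not merely $t$. This can be arranged by applying Lemma~\ref{lem:compactness} directly to the consistent $\mathcal{L}_H$-theory $t^{*}$ (which exists by Lemma~\ref{lem:max_extension}); then the Skolem closure $G(t)\subseteq M$ inherits every witness axiom of the form $\exists x\,\varphi(x)\to\varphi(c_{\langle\exists x\varphi,k\rangle})$.

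Having fixed that, I would check well-definedness and surjectivity essentially by bookkeeping. Well-definedness is soundness: $t^{*}\vdash s=s'$ implies $M\models s=s'$, so $s^{G(t)}=(s')^{G(t)}$. Preservation of function symbols is immediate from the definition of term interpretation. Surjectivity follows from the very definition of $G(t)$ as the substructure of $M$ generated by $\{c^{M}:c\in H\}$, since every such element is by construction the value of some closed $\mathcal{L}_H$-term.

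The main obstacle—and the technical heart of the proof—is injectivity together with the preservation and reflection of \emph{all} first-order formulas. I would handle both at once by proving the Henkin truth lemma by induction on formula complexity:
\[
F(t)\models\varphi([\bar{s}]) \;\iff\; t^{*}\vdash\varphi(\bar{s}) \;\iff\; G(t)\models\varphi(\bar{s}^{G(t)}).
\]
The atomic and boolean cases are routine; the existential case is where the Henkin witnesses are used. If $G(t)\models\exists x\,\psi(x,\bar{s}^{G(t)})$, then since $M\models t^{*}$ the constant $c_{\langle\exists x\psi,k\rangle}$ witnesses $\exists x\,\psi$ already inside $G(t)$, which simultaneously verifies the Tarski--Vaught criterion $G(t)\preceq M$. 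Conversely, $F(t)\models\exists x\,\psi$ yields, by maximality of $t^{*}$, a proof in $t^{*}$ involving that same witness constant, whose interpretation lies in $G(t)$. Specialising $\varphi$ to $s=s'$ gives injectivity; the general biconditional gives elementarity, and together with surjectivity this makes $\eta_t$ an isomorphism. The subtle point I expect to need the most care with is ensuring that the global Henkin scheme singles out a coherent index $k$ in the existential case, so that the \emph{same} constant plays the role of witness both in $F(t)$ (syntactically, via $t^{*}$) and in $G(t)$ (semantically, via $M$); this coherence is exactly what the global choice of $H$ fixed in Section~\ref{sec:mainthm} was designed to guarantee.
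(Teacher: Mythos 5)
Your proposal is essentially correct, but it proves the lemma by a genuinely different route than the paper does. The paper's own proof of Lemma~\ref{lem:isomorphism} is abstract: it observes that $F(t)$ and $G(t)$ are both models of the same complete theory $t^{*}$, asserts that both are saturated (or prime), and invokes the Back-and-Forth Lemma (Lemma~\ref{lem:back_and_forth}) to conclude they are isomorphic. You instead construct the explicit map $\eta_t([s]) = s^{G(t)}$ and verify directly that it is a well-defined, surjective, elementary bijection via the Henkin truth lemma and the Tarski--Vaught criterion. Your route has two concrete advantages. First, it produces the \emph{specific} canonical isomorphism that the rest of the paper needs for naturality (Lemma~\ref{lem:naturality} and Step~2 of the main proof both use exactly the map $[s] \mapsto s^{G(t)}$, so your argument is actually the one that matches the paper's overall architecture), whereas the paper's back-and-forth argument only yields \emph{some} isomorphism with no control over its interaction with the functors. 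Second, it sidesteps a genuine weak point in the paper's argument: the claim that the Henkin term model is saturated is not justified and is false for general complete theories (a complete theory need not even admit a countable saturated model), so the appeal to Lemma~\ref{lem:back_and_forth} does not go through as stated; your direct verification needs no saturation hypothesis at all, only completeness of $t^{*}$, the witness property, and the definition of $G(t)$ as the Skolem closure. Your explicit ``calibration'' step---arranging that the ambient model $M$ satisfies $t^{*}$ rather than merely $t$---also makes precise something the paper's proof asserts without justification. The one point you rightly flag as delicate, the coherence of the witness indexing across $F(t)$ and $G(t)$, is exactly where the remaining work lies, but it is the same global-scheme assumption the paper itself relies on.
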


\paragraph{Intuitive Explanation.}
Although $F(t)$ and $G(t)$ are built by different procedures, they are both models of the same maximal consistent theory $t^*$. In a countable language, any two saturated/prime models of $t^*$ are isomorphic (by back-and-forth).
\begin{remark}[Uniqueness of Saturated or Prime Models]
\label{rmk:saturated}
A key point in our argument is that both the Henkin-constructed model and the compactness-constructed model can be taken to be saturated (or at least prime) models of the same complete theory $t^*$. In a countable language, any two such saturated (or prime) models of $t^*$ are isomorphic (cf.\ Chang and Keisler, \emph{Model Theory}, or other standard references). This fact underlies our proof that $F(t)$ and $G(t)$ must be isomorphic, and it justifies the canonical nature of both constructions.
\end{remark}

\begin{proof}[Proof of Lemma~\ref{lem:isomorphism}]
Let $t$ be a consistent theory in a countable language $\mathcal{L}$. By Lindenbaum's Lemma (see \cite[Section~3]{barreto2503}), extend $t$ to a complete (maximal consistent) theory $t^*$. 

The Henkin construction produces a term model $F(t)$ as follows: one extends $t$ to $t^*$ by a fixed sequential procedure (using a fixed enumeration of all sentences and a fixed rule for introducing Henkin constants), then forms the term algebra over the extended language, and finally takes the quotient by the equivalence relation 
\[
s \sim s' \iff t^* \vdash s = s'.
\]
Thus, $F(t)$ is a model of $t^*$.

On the other hand, by the Compactness Theorem (see, e.g., \cite[Section~3]{barreto2503}), one can construct a model $G(t)$ of $t$. By choosing a fixed non-principal ultrafilter (or a fixed saturation procedure) in the construction, the model $G(t)$ is canonical (i.e., unique up to isomorphism) and, in particular, is a model of $t^*$.

Since both $F(t)$ and $G(t)$ are models of the same complete theory $t^*$ in a countable language, and by our constructions both are saturated (or prime) models, it follows from the Back-and-Forth Lemma (see Lemma~\ref{lem:back_and_forth} and \cite[Section~4]{barreto2503}) that any two such models are isomorphic. 

Hence, there exists an isomorphism 
\[
\phi: F(t) \to G(t)
\]
in the category $\mathbf{mod}$.
\end{proof}

\subsection{Naturality Condition}
\begin{lemma}\label{lem:naturality}
Let $F, G: \mathbf{th} \to \mathbf{mod}$ be the functors defined by the Henkin and compactness constructions, respectively, and let $\eta: F \to G$ map each class $[s] \in F(t)$ to its canonical interpretation in $G(t)$. For every morphism $f: t_1 \to t_2$ in $\mathbf{th}$, we have
\[
\eta_{t_2} \circ F(f) \;=\; G(f)\,\circ \eta_{t_1}.
\]
\end{lemma}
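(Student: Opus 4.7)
The plan is to verify the naturality square by a direct diagram chase on a representative term, reducing both composite maps to the same canonical interpretation in $G(t_2)$. Since $\eta_{t_1}$ and $\eta_{t_2}$ are defined pointwise on equivalence classes of terms in $\mathcal{L}_H$, and since Lemma~\ref{lem:functoriality_models} already supplies the explicit formulas $F(f)([s]) = [f(s)]$ and $G(f)(s^{G(t_1)}) = f(s)^{G(t_2)}$, the proof should amount to a careful unpacking of definitions rather than any substantive new construction.

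First I would fix an arbitrary element $[s] \in F(t_1)$, where $s$ is a term in $\mathcal{L}_H$. Chasing the top-then-right path, I apply $F(f)$ to obtain $[f(s)] \in F(t_2)$, and then $\eta_{t_2}$ sends this class to the interpretation $f(s)^{G(t_2)}$ in $G(t_2)$. Chasing the left-then-bottom path, I first apply $\eta_{t_1}$ to obtain $s^{G(t_1)}$, and then $G(f)$ sends this to $f(s)^{G(t_2)}$ by the canonical definition of $G(f)$ recorded in Lemma~\ref{lem:functoriality_models}. Since both paths terminate at the same element $f(s)^{G(t_2)}$, the square commutes on every generator.

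Before declaring victory I would discharge a few routine sanity checks. Well-definedness on equivalence classes is inherited from Lemma~\ref{lem:functoriality_models}: if $[s] = [s']$ in $F(t_1)$, then $t_1^* \vdash s = s'$, hence $t_2^* \vdash f(s) = f(s')$, and therefore $f(s)^{G(t_2)} = f(s')^{G(t_2)}$ because $G(t_2) \models t_2^*$. I would also remark that the chase above suffices for the full map and not merely for a set of generators, since every element of $F(t_1)$ is \emph{by construction} represented by some term $s$ in $\mathcal{L}_H$, and likewise every element of $G(t_1)$, being in the Skolem closure of the Henkin constants, is of the form $s^{G(t_1)}$.

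The main obstacle, such as it is, lies in making precise the clause \emph{``$f$ respects the global Henkin expansion''}: the equality $G(f)(s^{G(t_1)}) = f(s)^{G(t_2)}$ relies on the compactness-side map $G(f)$ commuting with term interpretation, which in turn requires that $f$ send Henkin constants to Henkin constants compatibly with the chosen ultraproduct or saturation data. I would invoke the compatibility condition already built into Lemma~\ref{lem:functoriality_models} and note that, because the \emph{same} global set $H$ is used in every fiber of both $F$ and $G$, this compatibility is automatic once $f$ preserves provability; this is precisely the role of the fixed global choice emphasized in Remark~\ref{rmk:global-choice}.
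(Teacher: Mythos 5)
Your proposal is correct and follows essentially the same route as the paper's own proof: a direct diagram chase on a representative term $[s]$, using $F(f)([s]) = [f(s)]$ and the defining property $G(f)(s^{G(t_1)}) = f(s)^{G(t_2)}$ to show both composites land on $f(s)^{G(t_2)}$. Your additional remarks on well-definedness and on the role of the global Henkin scheme are sensible elaborations of points the paper leaves implicit, but they do not change the argument.
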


\paragraph{Intuitive Explanation.}
If we first translate $s$ via $f$ and then apply $\eta$, or if we first apply $\eta$ and then translate via $G(f)$, we get the same result. This commutativity of the diagram is the essence of “naturality.”

\begin{proof}[Proof of Lemma~\ref{lem:naturality}]
Let $f: t_1 \to t_2$ be an arbitrary morphism in the category $\mathbf{th}$, and let $[s] \in F(t_1)$ be an equivalence class of a term $s$. By the definition of the functor $F$, we have
\[
F(f)([s]) = [f(s)].
\]
By the definition of the natural transformation $\eta$, its $t$-component $\eta_t$ maps an equivalence class $[s] \in F(t)$ to the canonical interpretation of $s$ in the model $G(t)$. Hence,
\[
\eta_{t_2}\bigl(F(f)([s])\bigr) = \eta_{t_2}([f(s)]).
\]
On the other hand, the functor $G$ is defined so that, for the morphism $f: t_1 \to t_2$, the map $G(f)$ sends the canonical interpretation of $s$ in $G(t_1)$ to the canonical interpretation of $f(s)$ in $G(t_2)$; that is,
\[
G(f)\bigl(\eta_{t_1}([s])\bigr) = \text{the canonical interpretation of } f(s) \text{ in } G(t_2).
\]
Thus, we obtain
\[
\eta_{t_2}([f(s)]) = G(f)\bigl(\eta_{t_1}([s])\bigr).
\]
Since this equality holds for every $[s] \in F(t_1)$, it follows that
\[
\eta_{t_2} \circ F(f) = G(f) \circ \eta_{t_1}.
\]
This completes the proof.
\end{proof}

\newpage
\subsection{2-Categorical Coherence (Modification) and Rigidity}
\begin{lemma}\label{lem:modification}
Assume that $\eta: F \to G$ is the canonical natural isomorphism between the functors $F$ and $G$. Then there exists a unique inverse natural transformation $\eta^{-1}: G \to F$ such that for every $t \in \mathbf{th}$,
\[
\eta_t \circ \eta_t^{-1} \;=\; \operatorname{id}_{G(t)}
\quad \text{and} \quad
\eta_t^{-1} \circ \eta_t \;=\; \operatorname{id}_{F(t)}.
\]
This ensures that $\eta$ is \emph{rigid} and that $F$ and $G$ are strongly naturally equivalent in the 2-categorical sense.
\end{lemma}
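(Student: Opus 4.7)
The plan is to construct $\eta^{-1}$ componentwise from the pointwise inverses, verify naturality by a short diagram chase, and then read off uniqueness. First, I would invoke Lemma~\ref{lem:isomorphism}, which tells us $\eta_t : F(t) \to G(t)$ is an isomorphism in $\mathbf{mod}$ for every $t$. By the general fact that any isomorphism in any category admits a unique two-sided inverse, there is a unique morphism $\eta_t^{-1}: G(t) \to F(t)$ with $\eta_t \circ \eta_t^{-1} = \operatorname{id}_{G(t)}$ and $\eta_t^{-1} \circ \eta_t = \operatorname{id}_{F(t)}$. I would then set $(\eta^{-1})_t := \eta_t^{-1}$, so that the pointwise identities demanded by the lemma hold by construction.

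Next I would verify that the family $\{\eta_t^{-1}\}_{t \in \mathbf{th}}$ assembles into a natural transformation $G \to F$. Given any morphism $f : t_1 \to t_2$ in $\mathbf{th}$, Lemma~\ref{lem:naturality} gives $\eta_{t_2} \circ F(f) = G(f) \circ \eta_{t_1}$. Composing this equation on the left by $\eta_{t_2}^{-1}$ and on the right by $\eta_{t_1}^{-1}$ yields
\[
F(f) \circ \eta_{t_1}^{-1} \;=\; \eta_{t_2}^{-1} \circ G(f),
\]
which is precisely the naturality square for $\eta^{-1}$. This step is purely diagrammatic and carries no new model-theoretic content.

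For uniqueness as a natural transformation, I would observe that if $\zeta : G \to F$ is any natural transformation satisfying $\eta_t \circ \zeta_t = \operatorname{id}_{G(t)}$ and $\zeta_t \circ \eta_t = \operatorname{id}_{F(t)}$ for every $t$, then componentwise uniqueness of inverses in $\mathbf{mod}$ forces $\zeta_t = \eta_t^{-1}$, hence $\zeta = \eta^{-1}$. In 2-categorical language, $\eta$ and $\eta^{-1}$ together exhibit $F$ and $G$ as isomorphic objects of the functor category $[\mathbf{th},\mathbf{mod}]$, or equivalently as equivalent via an invertible 2-cell in the ambient 2-category of (large) categories; \emph{rigidity} is the assertion that no nontrivial modification acts on $\eta$, which reduces to the uniqueness just established.

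The main point requiring care — rather than an actual obstacle — is verifying that each $\eta_t^{-1}$ genuinely belongs to $\mathbf{mod}$, i.e., is itself an elementary embedding. This is automatic: since $\eta_t$ is a bijection satisfying $F(t) \models \varphi(\bar a) \iff G(t) \models \varphi(\eta_t(\bar a))$, rewriting this biconditional with $\bar a = \eta_t^{-1}(\bar b)$ shows that $\eta_t^{-1}$ is elementary. With this in place, the entire argument reduces to a short diagram chase combining Lemmas~\ref{lem:isomorphism} and~\ref{lem:naturality} with the standard categorical fact that natural isomorphisms admit unique natural inverses.
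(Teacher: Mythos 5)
Your proposal is correct and follows essentially the same route as the paper: take the unique componentwise inverse of each isomorphism $\eta_t$, obtain naturality of $\{\eta_t^{-1}\}$ by composing the naturality square $\eta_{t_2}\circ F(f)=G(f)\circ\eta_{t_1}$ with inverses on both sides, and deduce uniqueness from the componentwise uniqueness of inverses. Your added check that each $\eta_t^{-1}$ is itself an elementary embedding (hence a morphism of $\mathbf{mod}$) is a small refinement the paper leaves implicit, but it does not change the argument.
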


\paragraph{Intuitive Explanation (Rigidity).}
- Each component $\eta_t$ is an isomorphism in $\mathbf{mod}$, hence has a unique inverse map $\eta_t^{-1}$.  
- “Rigidity” here means there are no nontrivial modifications of $\eta$; equivalently, any natural 2-morphism from $\eta$ to itself is the identity. This forces $\eta$ (and its inverse) to be “strict” in the 2-categorical sense.

\begin{proof}[Proof of Lemma~\ref{lem:modification}]
Assume that $\eta: F \to G$ is a natural isomorphism between the functors $F, G: \mathbf{th} \to \mathbf{mod}$. Then for each object $t \in \mathbf{th}$, $\eta_t: F(t) \to G(t)$ is an isomorphism in $\mathbf{mod}$; hence, there exists a unique inverse morphism 
\[
\eta_t^{-1}: G(t) \to F(t)
\]
such that
\[
\eta_t \circ \eta_t^{-1} = \operatorname{id}_{G(t)} \quad \text{and} \quad \eta_t^{-1} \circ \eta_t = \operatorname{id}_{F(t)}.
\]

We now show that the collection $\{\eta_t^{-1}\}_{t\in\mathbf{th}}$ forms a natural transformation $\eta^{-1}: G \to F$. Let $f: t_1 \to t_2$ be an arbitrary morphism in $\mathbf{th}$. By the naturality of $\eta$, we have
\[
\eta_{t_2} \circ F(f) = G(f) \circ \eta_{t_1}.
\]
Composing both sides on the left with $\eta_{t_2}^{-1}$, we obtain
\[
\eta_{t_2}^{-1} \circ \eta_{t_2} \circ F(f) = \eta_{t_2}^{-1} \circ G(f) \circ \eta_{t_1}.
\]
Since $\eta_{t_2}^{-1} \circ \eta_{t_2} = \operatorname{id}_{F(t_2)}$, it follows that
\[
F(f) = \eta_{t_2}^{-1} \circ G(f) \circ \eta_{t_1}.
\]
Composing on the right with $\eta_{t_1}^{-1}$, we then deduce
\[
\eta_{t_2}^{-1} \circ G(f) = F(f) \circ \eta_{t_1}^{-1}.
\]
This shows that for every morphism $f: t_1 \to t_2$ in $\mathbf{th}$, the inverse components satisfy the naturality condition:
\[
\eta_{t_2}^{-1} \circ G(f) = F(f) \circ \eta_{t_1}^{-1}.
\]
Hence, the assignment $t \mapsto \eta_t^{-1}$ defines a natural transformation $\eta^{-1}: G \to F$. The uniqueness of each $\eta_t^{-1}$ (as the inverse of $\eta_t$) implies that $\eta^{-1}$ is the unique inverse natural transformation of $\eta$, completing the proof that $\eta$ is a natural isomorphism.
\end{proof}

\subsection{Canonical Construction of Models}
\begin{lemma}\label{lem:canonical_construction}
Assume that the underlying first-order language $\mathcal{l}$ is countable. Then for every consistent first-order theory $t$, the following hold:
\begin{enumerate}
    \item \textbf{Henkin Construction:} Using a fixed enumeration of all sentences and a predetermined rule for introducing Henkin constants, the maximal consistent extension $t^*$ of $t$ and the resulting term model $F(t)$ (constructed by quotienting out provable equalities) are unique up to isomorphism.
    \item \textbf{Compactness Construction:} By applying the compactness theorem with a fixed non-principal ultrafilter or a fixed saturation procedure, a model $G(t)$ of $t$ is obtained which is unique up to isomorphism.
\end{enumerate}
\end{lemma}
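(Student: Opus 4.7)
The plan is to treat the two parts of the statement separately and, in each case, reduce the claim to the supplementary lemmas already proved. For the Henkin construction~(1), I would first observe that once the enumeration $\psi_0,\psi_1,\ldots$ of all sentences in $\mathcal{L}_H$ and the rule for introducing Henkin witness constants are fixed globally, every step of the sequential extension in Lemma~\ref{lem:max_extension} is deterministic: at stage $n$, the theory $t_n$ is specified, and $t_{n+1}$ is given by an explicit case distinction on whether $t_n \cup \{\psi_n\}$ is consistent. Consequently, $t^*$ is literally the same set of sentences in any carrying-out of the construction, which is strictly stronger than uniqueness up to isomorphism. The term model $F(t)$ is then the quotient of the fixed term algebra in $\mathcal{L}_H$ by the provable-equality relation of Lemma~\ref{lem:term_eq_relation}, and its uniqueness up to isomorphism is supplied directly by Lemma~\ref{lem:uniqueness_henkin}.

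For the compactness construction~(2), I would handle the two advertised methods in turn. In the ultraproduct approach, I would fix once and for all a canonical index set (for instance, the set of finite subsets of $\mathrm{Form}(\mathcal{L}_H)$), a canonical rule assigning each finite satisfiable subset $i$ a witnessing model $M_i$ (say, the Henkin model of a fixed consistent completion of $i$), and a globally fixed non-principal ultrafilter $\mathcal{U}$ as discussed in Section~\ref{subsec:choice-ultraproduct}; Łoś's theorem then delivers a model of $t$, and since every ingredient is identical across instantiations, the resulting $\prod_i M_i / \mathcal{U}$ is unique up to isomorphism. In the saturation approach, I would first invoke~(1) to produce the unique complete extension $t^*$, then use Lemma~\ref{lem:compactness} to construct a countable saturated model of $t^*$, and finally apply Lemma~\ref{lem:back_and_forth} to deduce uniqueness up to isomorphism.

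The hard part is justifying the canonicity of the compactness construction. Unlike the Henkin case, where determinism is visible step by step, the very existence of a non-principal ultrafilter on an infinite index set or of a countable saturated model of $t^*$ depends on the Axiom of Choice (Remark~\ref{rmk:choice-ultraproduct}); there is no ZFC-internal effective way to single out such objects. The resolution I would adopt is to regard these global choices as made once at the meta-level, so that they are shared across all theories simultaneously; canonicity of the output then reduces to the invariance of the chosen data under the operations one performs on $t$, which is essentially the content of Lemma~\ref{lem:functoriality_models} together with the canonicity clause of Lemma~\ref{lem:compactness}. Combining the conclusions from both parts then yields the statement.
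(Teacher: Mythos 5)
Your proposal is correct and follows essentially the same two-part decomposition as the paper's own proof: determinism of the fixed sequential Henkin extension (via Lemmas~\ref{lem:max_extension} and~\ref{lem:term_eq_relation}) for part~(1), and appeal to the globally fixed ultrafilter or saturation procedure, together with the back-and-forth argument, for part~(2). If anything, your version is more explicit than the paper's on the compactness side, where the paper merely asserts that the fixed choice makes the construction canonical, whereas you spell out the index set, the rule for the witnessing models, and the explicit invocation of Lemma~\ref{lem:back_and_forth}, and you candidly flag the reliance on a meta-level global choice that the paper leaves implicit.
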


\paragraph{Intuitive Explanation.}
Both methods rely on a single, fixed “choice” (e.g.\ the same enumeration for Henkin constants, the same ultrafilter or saturation scheme), ensuring the resulting model is determined uniquely up to isomorphism.

\begin{proof}[Proof of Lemma~\ref{lem:canonical_construction}]
We assume that the underlying first-order language $\mathcal{l}$ is countable. Let $t$ be a consistent theory in $\mathcal{l}$.

\textbf{(1) Henkin Construction:}  
Using a fixed enumeration of all sentences in $\mathcal{l}$ and a predetermined rule for introducing Henkin constants (as detailed in, e.g., \cite[Section~3]{barreto2503}), we extend $t$ to a maximal consistent theory 
\[
t^* = \bigcup_{n \in \mathbb{N}} t_n,
\]
via a sequential procedure (see Lemma~\ref{lem:max_extension}). Next, we form the term algebra $\mathcal{T}$ over the language extended by these Henkin constants, and define an equivalence relation $\sim$ on $\mathcal{T}$ by
\[
s \sim s' \iff t^* \vdash s = s',
\]
(see Lemma~\ref{lem:term_eq_relation}). The Henkin model $F(t)$ is then defined as the quotient algebra
\[
F(t) = \mathcal{T}/\sim.
\]
Since the enumeration and the rule for introducing Henkin constants are fixed, the maximal consistent theory $t^*$ and hence the term algebra $\mathcal{T}$ and the equivalence relation $\sim$ are uniquely determined (up to isomorphism). Therefore, the constructed model $F(t)$ is unique up to isomorphism.

\textbf{(2) Compactness Construction:}  
Since $t$ is consistent and the language is countable, every finite subset of $t$ is satisfiable. By the Compactness Theorem (see, e.g., \cite[Section~3]{barreto2503}), the entire theory $t$ is satisfiable. Now, by applying a fixed non-principal ultrafilter (or, equivalently, a fixed saturation procedure), one can construct a model $G(t)$ of $t$. This construction is canonical in the sense that, with the fixed choice of ultrafilter or saturation scheme, any two models obtained by this method are isomorphic. Hence, $G(t)$ is unique up to isomorphism.

Thus, both the Henkin construction and the compactness construction yield canonical models, unique up to isomorphism, as required.
\end{proof}

\section{Applications and Discussion}
\label{sec:applications}
The established natural transformation $\eta: F \to G$ provides a structural connection between the syntactic (proof-theoretic) and semantic (model-theoretic) approaches to first-order logic. By showing that the Henkin construction and compactness-based constructions yield isomorphic models (when properly defined with a global Henkin scheme and Skolem closure), we gain insight into the fundamental unity of these methods.

\textbf{Potential Applications:}
\begin{itemize}
\item \textbf{Automated Theorem Proving:} The explicit isomorphism between term models and compactness-based models could inform algorithms that switch between syntactic and semantic reasoning strategies.
\item \textbf{Formal Verification:} Understanding the precise relationship between different model constructions may help in verifying the correctness of logic-based formal methods and proof assistants.
\item \textbf{Non-Classical Logics:} The categorical framework developed here might extend to intuitionistic, modal, or other non-classical logics, where similar questions about the relationship between completeness and compactness arise.
\end{itemize}

\textbf{Open Questions:}
\begin{itemize}
\item Can similar natural transformations be constructed for uncountable languages or for logics with infinitary connectives?
\item What additional structure (if any) is induced by considering 2-categorical aspects more carefully, such as natural transformations between different choices of global Henkin schemes?
\item How does this framework interact with algorithmic concerns such as decidability and computational complexity?
\end{itemize}

\section{Conclusion}
\label{sec:conclusion}

In this paper, we have established a natural relationship between two canonical model constructions in first-order logic: the Henkin (completeness-based) construction and compactness-based constructions. By introducing a globally fixed Henkin expansion scheme and defining the functors 
\[
F,\,G : \mathbf{th} \to \mathbf{mod},
\]
where $F(t)$ is the Henkin term model and $G(t)$ is the Skolem closure within a compactness-based model, we established a natural transformation 
\[
\eta: F \to G
\]
such that each component \(\eta_t: F(t) \to G(t)\) is an isomorphism in \(\mathbf{mod}\).

\textbf{Key Technical Requirements:} The isomorphism holds because:
\begin{itemize}
\item We use a \emph{global} set of Henkin constants $H$ uniformly across all theories, ensuring functoriality.
\item $G(t)$ is defined as the Skolem closure generated by $H$, not as an arbitrary saturated model or ultraproduct.
\item The morphisms in $\mathbf{mod}$ are elementary embeddings, preserving all first-order properties.
\end{itemize}

\textbf{Limitations and Alternative Approaches:} 
\begin{itemize}
\item If $G(t)$ is defined as an arbitrary saturated model without the Skolem closure restriction, $\eta_t$ may only be an elementary embedding (injective but not surjective).
\item For special classes of theories (\(\aleph_0\)-categorical or atomic complete theories), the construction simplifies due to unique model properties.
\item Claims of "rigidity" (uniqueness of natural transformations) do not hold in general, as uniform permutations of Henkin constants yield nontrivial automorphisms of $F$.
\end{itemize}

This framework offers structural insight into the interplay between proof theory and model theory and may suggest applications in automated theorem proving and formal verification. Future work could explore adaptations to intuitionistic or modal logics, or investigate conditions under which similar natural transformations exist for other logical systems.

\section{Appendix}
\label{sec:appendix}

In this appendix, we present additional proofs, detailed calculations, and further examples that complement the results in the main text. In particular, the appendix includes:
\begin{itemize}
    \item A complete proof of the back-and-forth construction used in Lemma~\ref{lem:back_and_forth}.
    \item Detailed verifications of the functoriality of the Henkin and compactness-based model constructions.
    \item Concrete examples illustrating the construction of models for specific theories.
\end{itemize}

These supplementary materials are provided to offer deeper insight into the technical details and to demonstrate how our unified framework can be applied to various logical systems.

\end{document}